\lstdefinelanguage{Julia}%
  {morekeywords={abstract,break,case,catch,const,continue,do,else,elseif,%
      end,export,false,for,function,immutable,import,importall,if,in,%
      macro,module,otherwise,quote,return,switch,true,try,type,typealias,%
      using,while},%
   sensitive=true,%
   alsoother={$},%
   morecomment=[l]\#,%
   morecomment=[n]{\#=}{=\#},%
   morestring=[s]{"}{"},%
   morestring=[m]{'}{'},%
}[keywords,comments,strings]%
\bfseries\color{blue},
\DeclareMathOperator{\Cov}{Cov}
\DeclareMathOperator{\Ber}{Ber}
\newcommand{\regine}[1]{\marginpar{{\footnotesize R: #1}}}
\newcommand{\communique}{\leftrightarrow}
\newcommand{\N}{\mathbb N}
\newcommand{\Z}{\mathbb{Z}}
\newcommand{\Zdeux}{\mathbb{Z}^2}
\newcommand{\Zdeuxstar}{\mathbb{Z}_{*}^2} 
\newcommand{\Zp}{\mathbb{Z}_{*}^2}
\newcommand{\Zd}{\mathbb{Z}^d}
\newcommand{\C}{\mathbb{C}}
\newcommand{\R}{\mathbb{R}}
\renewcommand{\P}{\mathbb{P}}
\newcommand{\E}{\mathbb{E}}
\newcommand\OmegaEP{\Omega_{\textrm{EP}}}
\newcommand{\contour}{\Gamma}
\def\w0{\widetilde{0}}
\def\C{\mathcal{C}}
\newcommand{\espace}{\rule{0pt}{8ex}}
\renewcommand{\epsilon}{\varepsilon}
\renewcommand{\phi}{\varphi}
\newcommand{\ie}{\emph{i.e. }}
\newcommand{\miniop}[3]{%
\renewcommand{\arraystretch}{0.6}
\begin{array}{c}
{\scriptstyle #1}\\
#2\\
{\scriptstyle #3}
\end{array}
\renewcommand{\arraystretch}{1}}
\newcommand{\1}{1\hspace{-1.3mm}1}
\newtheorem{theorem}{Theorem}[section]
\newtheorem{lemma}{Lemma}[section]
\newtheorem{conjecture-dsk}[theorem]{Conjecture}
\newtheorem{lemme}[theorem]{Lemma}
\newtheorem{conj}[theorem]{Conjecture}
\begin{document}

\title[Eulerian percolation]{Does Eulerian percolation on $\mathbb Z^2$ percolate ?}

{
\author{Olivier Garet}
\address{
Universit\'e de Lorraine, Institut \'Elie Cartan de Lorraine, CNRS, UMR 7502, Vandoeuvre-l{\`e}s-Nancy, F-54506, France\\
}
\email{Olivier.Garet@univ-lorraine.fr}
\author{R{\'e}gine Marchand}
\address{
Universit\'e de Lorraine, Institut \'Elie Cartan de Lorraine, CNRS, UMR 7502, Vandoeuvre-l{\`e}s-Nancy, F-54506, France\\
}
\email{Regine.Marchand@univ-lorraine.fr}
\author{Ir\`ene Marcovici}
\address{
Universit\'e de Lorraine, Institut \'Elie Cartan de Lorraine, CNRS, UMR 7502, Vandoeuvre-l{\`e}s-Nancy, F-54506, France\\
}
\email{Irene.Marcovici@univ-lorraine.fr}

}
\def\motsclefs{Eulerian percolation, Ising model, percolation with degree constraints.}

\subjclass[2000]{60K35, 82B43.}
\keywords{\motsclefs}

\begin{abstract}
Eulerian percolation on $\Z^2$ with parameter $p$ is the classical Bernoulli bond percolation with parameter $p$ conditioned on the fact that every site has an even degree. We first explain why Eulerian percolation with parameter~$p$ coincides with the contours of the Ising model for a well-chosen parameter~$\beta(p)$. Then we study the percolation properties of Eulerian percolation. Some key ingredients of the proofs  are couplings between Eulerian percolation, the Ising model and FK-percolation.
\end{abstract}

\maketitle

\section{Introduction}
Eulerian percolation with parameter $p$ on the edges of a finite graph is the classical
independent Bernoulli percolation with parameter $p$ on its edges, but conditioned to be even, \ie conditioned to the fact that each vertex of the graph has an even number of open edges touching it. In this paper, we aim to study the percolation properties of the Eulerian (or even) percolation on the edges of $\Z^2$. 
This paper has two parts.

\medskip
1. On $\Z^2$, the event by which we want to condition has probability $0$. The first step is thus to define properly the Eulerian percolation measures on the edges of~$\Z^2$, by the mean of specifications in finite boxes and of Gibbs measures. Doing so, the Eulerian percolation measure with parameter $p$ is given by the contours of the Ising model on the sites of the dual $\Z^2_*\sim \Z^2$ for a well-chosen parameter $\beta=\beta(p)$:
\begin{theorem}
\label{THEO-mesure}
For every $p \in [0,1]$, there exists a unique even percolation measure on the edges of $\Z^2$ with opening parameter $p$, and we denote it by $\mu_p$. It is the image by the contour application of any Gibbs measure for the Ising model on the dual graph  $\Z^2_*$ of $\Z^2$, with parameter 
$$\beta=\beta(p)=\frac12\log\frac{1-p}{p} \quad \Leftrightarrow p=\frac{1}{1+\exp(2\beta)}.$$ 
Moreover, $\mu_p$ is invariant and ergodic under the natural action of $\Z^2$.
\end{theorem}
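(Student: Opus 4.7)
The strategy rests on the classical two-dimensional duality: an edge subset $E\subset\Z^2$ is even (every vertex has even degree) if and only if $E$ is the contour $\contour(\sigma)$ of some spin configuration $\sigma\in\{-1,+1\}^{\Z^2_*}$, and $\sigma$ is determined by $E$ up to the global sign change $\sigma\mapsto-\sigma$. The first step is to verify this lift in finite volume and to record the elementary identity
\[
e^{\beta\sigma_x\sigma_y}\;=\;C(\beta)\Bigl((1-p)\1_{\sigma_x=\sigma_y}+p\,\1_{\sigma_x\neq\sigma_y}\Bigr),\qquad \beta=\tfrac12\log\tfrac{1-p}{p},
\]
which shows that, with a suitable boundary condition, the Bernoulli-$p$ bond measure conditioned on evenness in a finite box is the $\contour$-image of the Ising specification on the dual box at inverse temperature $\beta(p)$.

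For existence, I would fix any Ising Gibbs measure $\nu$ on $\Z^2_*$ at parameter $\beta(p)$ (classical Ising theory gives at least one) and define $\mu_p:=\nu\circ\contour^{-1}$. The DLR equations for $\nu$ transport, via the finite-volume weight identity, onto the local specifications that define even percolation measures, hence $\mu_p$ is one such measure. Since $\contour$ commutes with translations of $\Z^2$, one obtains translation invariance of $\mu_p$ by choosing $\nu$ itself translation invariant---the unique phase when $\beta(p)\le\beta_c$, or the symmetric mixture $\tfrac12(\nu^++\nu^-)$ at low temperature.

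The step I expect to be the delicate one is uniqueness. Given an arbitrary even percolation measure $\mu$ on the edges of $\Z^2$, I would construct a lift $\tilde\nu$ on $\{-1,+1\}^{\Z^2_*}$ as follows: sample $\omega\sim\mu$, choose the spin at a reference dual site uniformly, and propagate the spin to every other dual site along any dual path, flipping at each crossing of an open edge of $\omega$. Evenness of $\omega$ is precisely what makes this definition path-independent, so $\tilde\nu$ is well defined and satisfies $\tilde\nu\circ\contour^{-1}=\mu$. Inverting the finite-volume weight identity shows that $\tilde\nu$ obeys the Ising DLR equations. The crucial point is that although several Ising Gibbs measures may coexist at low temperature, they all project to the \emph{same} measure under $\contour$, since the only distinction between them concerns the marginal of the global sign, which the contour map destroys. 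This forces $\mu=\mu_p$.

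Finally, translation-ergodicity follows from the observation that any translation-invariant event $A$ in the edge $\sigma$-algebra has pullback $\contour^{-1}(A)$ which is both translation invariant and invariant under the global spin flip. Evaluating under the unique Gibbs phase (if $\beta(p)\le\beta_c$) or under $\nu^+$ (if $\beta(p)>\beta_c$), and invoking the classical translation-ergodicity of these Ising phases, yields $\mu_p(A)=\nu(\contour^{-1}(A))\in\{0,1\}$.
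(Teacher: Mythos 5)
Your overall strategy is the same as the paper's: identify even configurations with Ising contours through the weight identity (this is the content of the paper's Lemma on the finite-volume specifications), push an Ising Gibbs measure forward by $\contour$ for existence, and deduce uniqueness from the fact that all Ising Gibbs measures at $\beta(p)$ have the same contour image. Where you differ is the organization of uniqueness: you build an explicit infinite-volume lift $\tilde\nu$ of an arbitrary even Gibbs measure by propagating a uniform reference sign, whereas the paper never constructs such a lift; it applies the finite-volume identification to $(\mu^p_{E(\Lambda_n)}f)(\eta)$ for a fixed even configuration $\eta$ and extracts weak subsequential limits of the conditioned Ising specifications, which are automatically Gibbs measures. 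Your lift is a legitimate alternative, but it needs more care than you give it: verifying the Ising DLR equations for $\tilde\nu$ requires the finite-volume identification on volumes with connected complement (so that exactly one of the two local colorings is compatible with the outside spins), and one must check that the extra independent sign bit does not spoil the specification. The paper's route sidesteps both points.

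There are two genuine gaps. First, your ``crucial point'' --- that all Ising Gibbs measures project to the same measure under $\contour$ --- is not a soft symmetry statement: it is precisely the Aizenman--Higuchi theorem that $\mathcal{G}(\beta)$ is the convex hull of $\gamma_\beta^+$ and $\gamma_\beta^-$, with no other extremal states (Dobrushin-type states, which exist on $\Z^3$, would have a different contour image). Without invoking this theorem, uniqueness is unproved; the paper cites it explicitly. Second, your treatment only covers $\beta(p)\ge 0$, i.e. $p\le 1/2$, while the theorem is stated for all $p\in[0,1]$. For $p>1/2$ one has $\beta(p)<0$, and for $\beta<-\beta_c$ there are again multiple Gibbs measures (staggered antiferromagnetic states), so ``the unique phase when $\beta(p)\le\beta_c$'' is false as written, and the extremal states in that regime are not invariant under all translations. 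One needs the correspondence $\beta\leftrightarrow-\beta$ given by flipping spins on the even sublattice (as in the paper, citing Georgii, Ch.~6) to transfer the structure theorem; the two extremal staggered states are still exchanged by the global spin flip, so the contour image is unchanged and translation invariant, and its ergodicity then follows, e.g., from ergodicity of the extremal phase under the even sublattice of translations.
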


Note that Eulerian percolation with parameter $p<1/2$, resp. $p>1/2$, corresponds to the contours of the Ising model in the ferromagnetic range $\beta>0$, resp. antiferromagnetic range $\beta<0$.
Theorem \ref{THEO-mesure} is an extension of Theorem~5.2 of Grimmett and Janson~\cite{GrimJans}, that studies random even subgraphs on finite planar graphs. In the same paper, they mention the existence of a thermodynamic limit, but the question of uniqueness is not asked.

\medskip
2. We are interested in the probability, under the even percolation measure $\mu_p$, of the percolation event
$$\C=\text{``there exists an infinite open cluster"}.$$
Our first result consists in proving the almost-sure uniqueness of the infinite cluster when it exists: 
\begin{theorem} \label{THEO:uni}
For every $p \in [0,1]$, there exists $\mu_p$-almost surely exactly one infinite cluster or $\mu_p$-almost surely no infinite cluster.
\end{theorem}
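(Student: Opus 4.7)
The plan is to adapt the Burton--Keane strategy to show that the number $N$ of infinite open clusters is $\mu_p$-almost surely in $\{0,1\}$. By Theorem~\ref{THEO-mesure}, $\mu_p$ is ergodic under $\Z^2$-translations, so $N$ is $\mu_p$-a.s.\ equal to some constant $n_0\in\{0,1,\dots,\infty\}$, and the task reduces to excluding $n_0\ge 2$.

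The principal obstacle is that $\mu_p$ does \emph{not} satisfy the classical Bernoulli finite-energy property: one cannot flip a single edge without destroying the even-degree constraint at its two endpoints. I would obtain a substitute via the Ising representation of Theorem~\ref{THEO-mesure}. A single spin flip at a dual site $v\in\Z^2_*$ changes exactly the four dual edges around $v$; equivalently, it XORs the Eulerian configuration with the unit square of $\Z^2$ centered at $v$. Since $\Z^2$ is simply connected, every finite even subgraph of $\Z^2$ is a sum modulo $2$ of unit squares, hence any two Eulerian configurations agreeing outside a finite box $\Lambda$ are related by a finite sequence of Ising spin flips, each of which has positive conditional probability by the standard Ising finite energy. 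This gives a modified finite-energy property for $\mu_p$: conditionally on the exterior of $\Lambda$, every Eulerian configuration compatible with the parities imposed on $\partial\Lambda$ has positive conditional probability, uniformly bounded below by a constant depending only on $\Lambda$ and $p$.

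Given modified finite energy, I would first rule out $2\le n_0<\infty$ by a direct merging argument. For $n$ large enough, two prescribed infinite clusters both meet the box $\Lambda_n=\{-n,\dots,n\}^2$ with positive probability. In the Ising picture one flips spins in a well-chosen finite subset of $\Lambda_n$ so as to fuse these two infinite contour pieces into one while leaving the remaining $n_0-2$ infinite contours unchanged. By modified finite energy this gives $\mu_p(N=n_0-1)>0$, contradicting $N=n_0$ a.s.

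To rule out $n_0=\infty$, I would run the classical Burton--Keane trifurcation argument. Under the hypothesis $n_0=\infty$, with positive probability at least three distinct infinite clusters meet $\Lambda_n$ for $n$ large; a similar Ising modification inside $\Lambda_n$ then produces an Eulerian configuration in which the origin has interior degree $4$ and whose removal from the ambient infinite cluster leaves at least three infinite components, making the origin a trifurcation. Modified finite energy yields $\mu_p(0\text{ is a trifurcation})>0$, and translation invariance upgrades this to a positive density of trifurcations inside $\Lambda_n$, contradicting the standard combinatorial bound $O(|\partial\Lambda_n|)=O(n)$. The genuinely delicate point throughout is the explicit construction of the Ising spin modification realizing the merging, respectively the trifurcation, without affecting the remaining infinite clusters; the required flexibility is provided by the planar, simply connected topology of $\Z^2$, which guarantees that any prescribed finite even perturbation of the contour configuration is achievable by a finite chain of spin flips.
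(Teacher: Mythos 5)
Your overall route coincides with the paper's: ergodicity of $\mu_p$ (from Theorem \ref{THEO-mesure}) makes the number $N$ of infinite clusters a.s.\ constant, and Burton--Keane is then run through the contour representation, using the finite energy of the Ising model as a substitute for the finite energy that $\mu_p$ lacks. Your ``modified finite energy'' statement is correct, and in fact it is immediate from the DLR equation \eqref{eqgibbsmu}: the specification $\mu^p_{\Lambda,\omega}$ gives positive weight to every even configuration compatible with the exterior, so the conditional positivity holds without any detour; the decomposition of a finite even graph into unit squares (i.e.\ spin flips) is a pleasant way to describe which interior configurations are admissible, nothing more.

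The gap is at the decisive step, which you name but do not carry out: knowing that \emph{every} admissible even perturbation of the box has positive conditional probability does not produce \emph{a} perturbation that merges clusters or creates a trifurcation --- one must exhibit an explicit spin configuration inside the box and control its effect on the infinite contour clusters. This is where the paper's proof does its real work. For the merging step it simply recolors the box as a chessboard, so that every interior dual edge becomes a contour edge and all infinite clusters meeting the box fuse. For the trifurcation step it does \emph{not} start from three infinite clusters meeting the box, but from at least $30$, in order to extract at least $15$ disjoint $+1$-clusters on the inner boundary, discard up to $12$ of them sitting near the corners, and keep $3$ that are far enough apart to be joined to the center by three paths of $+1$ spins that are pairwise not $*$-connected (all remaining interior spins $-1$, chessboard in $B_1$). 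Without such a separation argument your assertion that the modification ``leaves at least three infinite components'' after removing the origin is unjustified: if the incoming infinite contour clusters attach to the box at $*$-adjacent sites or in corners, a naive recoloring can merge the corresponding interfaces before they reach the origin, and the resulting vertex need not be a trifurcation. In short, the homological remark guarantees that any even local configuration is \emph{reachable}, but the \emph{existence} of an even local configuration with the required effect on the infinite-cluster count is exactly the geometric content your sketch leaves out.
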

Note that the ``even degree" condition induces correlations between states of edges, that break the classical finite energy property. However, we can adapt the classical proof by using the interpretation in terms of contours of the Ising model. To study the percolation itself, we have at our disposal the results proved for the Ising model on $\Z^2$, especially in the ferromagnetic range. Remember that $\beta_c={1\over 2} \log (1+\sqrt{2})$ is the critical value of the Ising model in $\Z^2$; we introduce the corresponding percolation parameter
$$p_{c,\mathrm{even}}=\frac{1}{1+\exp(2\beta_c)}  =1-\frac{1}{\sqrt 2} <\frac12.$$
We prove the following: 
\begin{theorem} \label{THEO:perco}
In terms of even percolation with parameter $p\in [0,1]$, 
\begin{itemize}
\item for every $p \in[0,p_{c,\mathrm{even}}]$, $\mu_p(\C)=0$,
\item for every $p \in (p_{c,\mathrm{even}}, 1] \backslash\{1-p_{c,\mathrm{even}}\}$, $\mu_p(\C)=1$.
\end{itemize}
In terms of the Ising model with parameter $\beta\in\R$, these results correspond to:
\begin{itemize}
\item for $\beta\geq\beta_c$, for every Gibbs measure with parameter $\beta$, contours a.s. do not percolate,
\item for $\beta<\beta_c$ such that $\beta\not=-\beta_c$, for every Gibbs measure with parameter $\beta$, contours a.s. percolate.
\end{itemize}
\end{theorem}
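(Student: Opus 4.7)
By Theorem~\ref{THEO-mesure}, $\mu_p$ is the pushforward by the contour map of any Ising Gibbs measure on $\Z^2_*$ at $\beta=\beta(p)$, and the critical values correspond as $p_{c,\mathrm{even}}\leftrightarrow\beta_c$, $\tfrac{1}{2}\leftrightarrow 0$, $1-p_{c,\mathrm{even}}\leftrightarrow-\beta_c$. I treat non-percolation for $\beta\geq\beta_c$ and percolation for $\beta<\beta_c$ with $\beta\neq-\beta_c$ separately.

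\textbf{Non-percolation for $\beta\geq\beta_c$.} I would invoke the Edwards--Sokal coupling of the ferromagnetic Ising model ($\beta\geq 0$) with the FK$(q=2)$ random-cluster model on $\Z^2_*$ at $p_{\mathrm{FK}}=1-e^{-2\beta}$: sample an FK configuration and color each cluster independently $\pm1$. Any FK-open edge has its endpoints in the same cluster, hence the same spin, so it is never a contour. The contour edges in $E(\Z^2)$ therefore lie inside the set of dual-FK-open edges on $\Z^2$, and it suffices to show that this dual FK measure has no infinite cluster. For $\beta>\beta_c$ the primal FK is strictly supercritical, so the dual is strictly subcritical and has no infinite cluster; at $\beta=\beta_c$ both are critical and I invoke the classical absence of an infinite cluster for critical FK$(q=2)$ on $\Z^2$. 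The antiferromagnetic part of this half ($\beta\in(-\beta_c,\beta_c)$ with $\beta<0$) is actually empty since we are in the range $\beta\geq\beta_c$, so only the ferromagnetic range is needed.

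\textbf{Percolation for $\beta<\beta_c$, $\beta\neq-\beta_c$.} The key ingredient is a topological principle I would establish as a separate lemma: for any translation-invariant ergodic spin configuration on $\Z^2_*$, the contour subgraph on $\Z^2$ has an infinite connected component a.s.~if and only if neither the $+$ nor the $-$ sites of $\sigma$ percolate in $\Z^2_*$ for nearest-neighbor site percolation. Given this, I verify ``no spin percolation'' case by case: for $\beta\in(0,\beta_c)$ by Higuchi's theorem in the unique 2D Ising Gibbs measure; for $\beta=0$ because iid spins at density $1/2$ lie below the threshold $p_c^{\mathrm{site}}(\Z^2)$; for $\beta\in(-\beta_c,0)$ via the bipartite spin-flip $\tau_i=\epsilon(i)\sigma_i$ reducing to the ferromagnetic subcritical case; for $\beta<-\beta_c$ in each extremal staggered Gibbs measure, because $\sigma$-clusters can only propagate through rare ``defects'' against the staggered background, whose density lies below the relevant site-percolation threshold.

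\textbf{Main obstacle.} The hardest step is proving the topological principle: because the Eulerian measure lacks the classical finite-energy property (the ``even-degree'' constraint induces global correlations), a direct Burton--Keane-type argument is unavailable. I would proceed instead by finite-box approximations, arguing that whenever all spin clusters inside a large box $B$ are finite, the interfaces between the $+$ and $-$ regions form a connected subgraph reaching $\partial B$, and then pass to the thermodynamic limit via compactness to produce an infinite contour cluster. The case $\beta=-\beta_c$ is excluded because at this antiferromagnetic critical point the sublattice-mixed structure of $\sigma$-clusters in the staggered Gibbs measures may in fact produce a percolating $\sigma$-cluster, which would force contours \emph{not} to percolate via the principle; settling this borderline case seems to require sharper control of the critical staggered phase than is available through the present methods.
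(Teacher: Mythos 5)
The first half of your argument (no percolation of contours for $\beta\geq\beta_c$, i.e.\ $p\leq p_{c,\mathrm{even}}$) is sound and is essentially the comparison the paper itself uses: the contours are dominated by the dual of the wired FK$(2)$ measure, i.e.\ $\mu_p\preceq\phi^0_{2p,2}$, and one concludes by subcriticality (resp.\ absence of an infinite cluster at the critical free FK measure). The paper treats $p<p_{c,\mathrm{even}}$ instead via Russo's result on $*$-percolation of the minority spin and keeps the FK comparison only for the critical point, but your uniform FK route is fine.

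The second half, however, has a genuine gap: the ``topological principle'' you rely on is false as stated, and the sketch you give for it does not work. For the ``only if'' direction, take the translation-invariant ergodic measure obtained by colouring each vertical column $\pm1$ independently with probability $1/2$: both spins percolate in the nearest-neighbour sense, yet the contours contain infinite vertical lines, so contour percolation and spin percolation coexist. For the ``if'' direction (the one you actually need), your finite-box claim is already false deterministically: colour concentric square annuli of thickness two with alternating signs; every spin cluster is finite, but the interfaces are pairwise disjoint finite circuits, so they do not form a connected subgraph reaching the boundary, and no compactness argument will produce an infinite contour cluster from this. The missing ingredient is exactly what the paper uses: one needs an infinite $*$-chain of $+$ spins all of whose nearest-neighbour clusters are finite, because the contours of the clusters of two consecutive $*$-neighbours share a vertex, and this is what glues the finite contours into an infinite connected set; the $*$-percolation input (Higuchi) is indispensable and absent from your scheme. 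Your case-by-case verifications in the antiferromagnetic range are also not proofs: the sublattice flip maps same-spin nearest-neighbour edges to opposite-spin ones, so ``no monochromatic NN percolation'' does not transfer from the ferromagnetic model as you claim for $\beta\in(-\beta_c,0)$; and for $\beta<-\beta_c$ the remark that defects are ``rare, below the site threshold'' gives no domination argument. The paper instead handles $p\in(1/2,1-p_{c,\mathrm{even}})$ by an explicit connectivity-increasing coupling between $\mu_p$ and $\mu_{1-p}$ (flipping the four edges around dual sites of one sublattice), and $p>1-p_{c,\mathrm{even}}$ by the FK duality comparison $(\phi^0_{2(1-p),2})^c\preceq\mu_p$ together with the Beffara--Duminil-Copin identification of the FK critical point (with a complete elementary argument only for $p\geq 3/4$); some replacement of this type is needed where your principle and its verifications break down.
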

These results are summarized in the following table:
$$
\begin{array}{|l|lccccccccr|}
\hline
\multirow{2}*{$p$} & \multirow{2}*{$0$} && p_{c,\mathrm{even}} && \multirow{2}*{1/2} &&1-p_{c,\mathrm{even}} &  & \multirow{2}*{1} &\\
& && =1-1/ \sqrt{2} && && =1/ \sqrt{2}& & & \\
\hline
\beta(p) & +\infty && \beta_c && 0 && -\beta_c & & -\infty &\\
\hline
\mu_p && \text{no perco.} & ] &  & \text{perco.}  & & $?$ &\text{perco.} & &\\
\hline
\end{array}
$$

\medskip
We did not manage to settle the case $p=1-p_{c,\mathrm{even}}$ (corresponding to $\beta=-\beta_c$ for the Ising model). In independent Bernoulli bond percolation, $p \mapsto \P_p(\C)$ is non-decreasing, and this follows from a natural coupling of percolation for all parameters $p\in[0,1]$. The same monotonicity occurs for FK percolation with parameter $q\ge 1$. This is strongly related to the fact that FK percolation satisfies the FKG inequality. Here, conditioning by the Eulerian condition breaks the association, even if the underlying graph is Eulerian. See Section~\ref{leschosesetranges} for an example of the strange things that may happen.
We naturally conjecture that $p_{c,\mathrm{even}}$ is indeed the unique percolation threshold for Eulerian percolation on $\Z^2$: 
\begin{conj} $\;$

$\bullet$ In terms of even percolation: $\mu_{1-p_{c,\mathrm{even}}}(\C)=1$. 

$\bullet$ In terms  of the Ising model: for every Gibbs measure with parameter $-\beta_c$, contours a.s. percolate.
\end{conj}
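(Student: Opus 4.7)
The plan is to exploit the bipartite structure of $\mathbb Z^2_*$ via a sublattice spin flip. Writing $\mathbb Z^2_*=A\cup B$ for its natural $2$-colouring, set $\tau_x=\sigma_x$ on $A$ and $\tau_x=-\sigma_x$ on $B$. Since every edge joins $A$ to $B$, this involution maps the antiferromagnetic Ising specification at $-\beta_c$ onto the ferromagnetic one at $+\beta_c$, and for every neighbouring pair $\{x,y\}$ one has $\sigma_x\neq\sigma_y$ if and only if $\tau_x=\tau_y$. After the flip, the contour of $\sigma$ at $-\beta_c$ therefore coincides edge-by-edge with the set of \emph{same-spin} edges of $\tau$ at $+\beta_c$. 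Since by Aizenman--Higuchi the critical ferromagnetic Ising model on $\mathbb Z^2_*$ admits a unique Gibbs measure $\mu_{\beta_c}$, the conjecture is equivalent to showing that the random graph $\{\{x,y\}:\tau_x=\tau_y\}$ almost surely contains an infinite connected component under $\mu_{\beta_c}$.

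Any path of same-spin edges is monochromatic, so this is in turn equivalent to the existence of an infinite monochromatic ($+$ or $-$) site cluster under $\mu_{\beta_c}$. By the spin-flip symmetry of the unique Gibbs measure and by translation-ergodicity, it suffices to produce a $+$-site infinite cluster with positive probability. Two inputs support the argument: first, Theorem~\ref{THEO:perco} applied at $p=p_{c,\mathrm{even}}$ says that contours do \emph{not} percolate at $+\beta_c$, so a.s.\ they decompose into a locally finite family of finite loops, which geometrically forces the complement graph to have unbounded components; second, the sharp-threshold theorem of Higuchi for 2D Ising site percolation locates the critical external field at $h=0$ precisely at $\beta_c$, giving $+$-site percolation under $+$-boundary conditions for every $\beta>\beta_c$.

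The hard part will be to push the site-percolation statement down to $\beta=\beta_c$ itself. The site-percolation probability need not be lower semi-continuous in $\beta$, so the na{\"\i}ve limit $\beta\downarrow\beta_c$ of Higuchi's theorem does not conclude. Two natural substitutes both fail: a full RSW/box-crossing input for critical Ising spin clusters would close the argument but lies outside the tools developed in the present paper, and monotone comparison with critical FK$(q{=}2)$ is useless because the latter has no infinite cluster on $\mathbb Z^2$. A final obstruction is that, as the examples in Section~\ref{leschosesetranges} show, the map $p\mapsto\mu_p(\mathcal C)$ is not monotone, so one cannot sandwich $\mu_{1-p_{c,\mathrm{even}}}$ between two nearby measures already known to percolate. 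These are precisely the obstacles we were unable to circumvent, and this is why the statement is left as a conjecture.
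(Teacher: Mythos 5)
The statement you were asked about is left as an open conjecture in the paper, and your write-up, as you yourself acknowledge in the last paragraph, is not a proof either; so there is no "paper proof" to match against. The honest assessment of the difficulty is welcome, and your opening move is sound: the sublattice spin flip does identify the Gibbs measures at $-\beta_c$ with those at $+\beta_c$ (the paper uses exactly this correspondence), uniqueness at $\beta_c$ is correctly invoked, and the contour edges of $\sigma$ are indeed, edge by edge, the primal duals of the agreement edges of $\tau$.

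However, your central reduction contains a genuine error: percolation of the contour (a subgraph of $\Z^2$) is \emph{not} equivalent to percolation of the agreement edges of $\tau$ on the dual lattice, because the bijection $e\leftrightarrow e_*$ does not preserve connectivity. Locally, when the contour passes straight through a primal vertex $x$, the two corresponding agreement edges of $\tau$ are \emph{opposite} sides of the dual plaquette around $x$, share no dual vertex, and in the degree-two case necessarily carry opposite spins; so an infinite contour path does not produce an infinite monochromatic cluster. Conversely, an infinite monochromatic line of $\tau$ flanked above and below by the opposite spin contributes only pairwise-disjoint parallel rungs to the contour, and one can complete such a configuration (e.g.\ three constant rows $-,+,-$ embedded in a chessboard) so that the contour splits into finite loops although $\tau$ has infinite monochromatic clusters. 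The equivalence also fails probabilistically: under the unique critical measure $\gamma_{\beta_c}$ there is almost surely \emph{no} infinite monochromatic nearest-neighbour cluster (spin-flip symmetry, uniqueness and the standard non-coexistence argument for translation-invariant FKG measures rule it out), so the statement you declare "equivalent" to the conjecture is in fact false, while the conjecture is expected to be true. The correct dictionary, used in Lemmas \ref{LEM:un} and \ref{LEM:deux} of the paper, goes through $*$-connectivity of spin clusters (contour percolation is implied by $*$-percolation of $+$ without ordinary $+$-percolation, and implies $*$-percolation of both signs); the sublattice flip garbles $*$-connectivity, since diagonal neighbours lie in the same sublattice, and this is precisely why the case $\beta=-\beta_c$ resists the ferromagnetic toolbox. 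Consequently, even the "missing ingredients" you list (an RSW theory for critical spin clusters, continuity at $\beta_c$ of Higuchi's result) would not close the argument along the route you propose.
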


The percolation results for $p \le 1/2$ essentially follow from the results about percolation of colors in the Ising model in the ferromagnetic case $\beta>0$. The Ising model in the antiferromagnetic case has been much less studied, so other kinds of arguments are needed for $p>1/2$. 

In order to settle the case $1/2<p<1-p_{c,\mathrm{even}}$, we introduce a coupling between $\mu_p$ and $\mu_{1-p}$. This coupling has the property to increase the connectivity, so that  percolation for $p\in(p_c,1/2)$ implies  also percolation for $p\in(1/2, 1-p_{c,\mathrm{even}})$.

The case for $p>1-p_{c,\mathrm{even}}$ follows from the link between the Ising model and FK percolation. A stochastic comparison between even percolation
with a large parameter and independent percolation with a large parameter gives the result for $p\ge 3/4$, and we only sketch the extension of that proof to $p\in(1-p_{c,\mathrm{even}},1]$, using techniques from Beffara and Duminil-Copin~\cite{MR2948685}.

\section{Eulerian percolation probability measures}

On $\Z^2$, we consider the set of edges $\E_2$ between vertices at distance $1$ for $\|.\|_1$. An edge configuration is an element
$\omega \in \{0,1\}^{\E_2}:$
if $\omega(e)=1$, the edge $e$ is present (or open) in the configuration $\omega$, and if $\omega(e)=0$, the edge is absent (or closed).
For $x \in \Z^2$, we define the degree $d_\omega(x)$ of $x$ in the configuration $\omega$ by setting
$$d_x(\omega)=\sum_{e\ni x} \omega(e).$$
An Eulerian edge configuration is then an element of
$$\OmegaEP=\{\omega\in\{0,1\}^{\E_2}: \; \forall x\in\Zd\,,\; d_x(\omega)=0\; [2]\}.$$
If $\omega, \eta \in \OmegaEP$ and $\Lambda \subset \E_2$, we denote by $\eta_{\Lambda}\omega_{\Lambda^c}$ the concatenation of the configuration $\eta$ restricted to $\Lambda$ and of the configuration $\omega$ restricted to $\Lambda^c$.

\subsection*{Gibbs measures for Eulerian percolation}

For each finite subset $\Lambda$ of $\E_2$ and each function $f$ on $\OmegaEP$, we can define 
\begin{align}
\forall \omega\in\OmegaEP \quad (M^p_{\Lambda}f)(\omega) & =\sum_{\eta_{\Lambda}\in\{0,1\}^{\Lambda}}\1_{\OmegaEP}(\eta_{\Lambda}\omega_{\Lambda^c})f(\eta_{\Lambda}\omega_{\Lambda^c}) \left( \frac{p}{1-p} \right)^{\sum_{e\in\Lambda}\eta_e}, \nonumber \\
(\mu^p_{\Lambda}f)(\omega) & = \frac{(M^p_{\Lambda}f)(\omega)}{(M^p_{\Lambda}1)(\omega)}. \label{compo}
\end{align}
Note that $\mu^p_{\Lambda}$ is Feller, in the following sense:
$\mu^p_{\Lambda}f$ is continuous (for the product topology) as soon as $f$ is continuous.
A standard calculation gives
\begin{align*}
  \mu^p_{\Delta}\circ \mu^p_{\Lambda}=\mu^p_{\Delta}\text{ for }\Lambda\subset\Delta.
\end{align*}
We denote by $\mu_{\Lambda,\omega}^{p}$ the probability measure on
$\{0,1\}^{\E_2}$ that is such that, for each bounded measurable function~$f$, 
$$\int_{\OmegaEP} f\ d\mu_{\Lambda,\omega}^p=(\mu^p_{\Lambda}f)(\omega)=\frac{\miniop{}{\sum}{\eta_{\Lambda}\in\{0,1\}^{\Lambda}}\1_{\OmegaEP}(\eta_{\Lambda}\omega_{\Lambda^c})f(\eta_{\Lambda}\omega_{\Lambda^c}) \left( \frac{p}{1-p} \right)^{\sum_{e\in\Lambda}\eta_e}}{\miniop{}{\sum}{\eta_{\Lambda}\in\{0,1\}^{\Lambda}}\1_{\OmegaEP}(\eta_{\Lambda}\omega_{\Lambda^c}) \left( \frac{p}{1-p} \right)^{\sum_{e\in\Lambda}\eta_e}}.$$
A probability measure $\mu$ on $(\{0,1\}^{\E_2},\mathcal{B}(\{0,1\}^{\E_2}))$ is said to be a Gibbs measure for Eulerian percolation (or a Eulerian percolation probability measure) if one has
\begin{itemize}
\item $\mu(\OmegaEP)=1$
\item For each  continuous fonction on $\{0,1\}^{\E_2}$, for each finite subset $\Lambda$ of $\E_2$, 
\begin{equation}
\label{eqgibbsmu}
  \int_{\OmegaEP} f\ d\mu =\int_{\OmegaEP} (\mu^p_{\Lambda} f)d\mu.
\end{equation}  
\end{itemize}
We denote by $\mathcal{G}_{\textrm{EP}}(p)$ the set of Gibbs measures for Eulerian percolation with opening parameter $p$.

\subsection*{Colorings with two colors and Eulerian percolation}

A natural way to obtain a Eulerian configuration of the edges of a planar graph is to take the contours of a coloring in two colours of the sites of its dual, and this is what we decribe now in the $\Z^2$ case.

Let $\Zp=(1/2,1/2)+\Z^2$ be the dual graph of $\Z^2$. The set $\E^2_*$ of edges of $\Zp$ is the image of $\E^2$ by the translation with respect to the vector $(1/2,1/2)$. If $e \in \E^2$, we denote by $e_*$ its dual edge, \ie the only edge in $\E^2_*$ that intersects $e$. We can map any coloring of the sites of $\Zp$ with the two colors $-1$ and~$1$ to its contour in the following way:
$$
\begin{array}{lrcl} 
\contour: &\{-1,1\}^{\Z^2_*} & \longrightarrow & \Omega_{\textrm{EP}} \\
&\sigma=(\sigma_{i_*})_{{i_*}\in\Zp} & \longmapsto & (\eta_e)_{e\in\E^2}, \text{ with } \eta_{e}=\1_{\{\sigma_{i_*}\ne \sigma_{j_*}\}} \text{ if } e_*=\{i_*,j_*\}
\end{array}.$$
Let us see that $\contour(\sigma) \in \Omega_{\textrm{EP}}$. Indeed, set $\eta=\contour(\sigma)$, and fix $x \in \Z^2$. Let 
$a_*,b_*,c_*,d_*$ be the four corners of the square with length side $1$ in  $\Z^2_*$ whose center is $x$: then the four edges issued from $x$ are the dual edges of $\{a_*,b_*\}$, $\{b_*,c_*\}$, $\{c_*,d_*\}$ and $\{d_*,a_*\}$. Thus
\begin{align*}
(-1)^{d_x(\eta)} 
& = (-\sigma_{a_*}\sigma_{b_*})(-\sigma_{b_*}\sigma_{c_*})(-\sigma_{c_*}\sigma_{d_*})(-\sigma_{d_*}\sigma_{a_*})=1.
\end{align*}
So $\contour(\sigma) \in \Omega_{\textrm{EP}}$.

Reciprocally, the dual of a planar Eulerian graph is bipartite (see for instance Wilson and Van Lint~\cite{MR1871828}, Theorem (34.4)
), and there are exactly two ways of coloring the sites of a connected bipartite graph with two colors in such a way that the extremities of every edge are in different colors. In our $\Z^2$ case, fix a Eulerian edge configuration $\eta$. By setting  $c_{\eta}(0_*)=+1$, and  for any $x_*\in \Z^2_*$,  $c_{\eta}(x_*)$ equals (-1) power  the number of edges in $\eta$ crossed by any path (in the dual) between $0_*$ and $x_*$, we properly define a coloring $c_\eta$ of $\Z^2_*$, and 
$\contour^{-1}(\eta)=\{c_\eta, -c_\eta\}.$ Finally, the contour application $\contour$ is surjective and two-to-one.

As we will see now, the Gibbs measures for Eulerian percolation can be obtained
as the images by the contour application $\contour$ of the Gibbs measures for the Ising model in $\Z_2^*$. 

\subsection*{Gibbs measures for the Ising model on $\Z^2_*$}
It is of course the same model as the Ising model on $\Z^2$, but to avoid  confusion between the initial graph $\Z^2$ and its dual $\Z^2_*$ in the sequel, we present it directly in the dual $\Z^2_*$.
Fix a parameter $\beta \in \R$.
\newcommand{\Sq}{\{-1,+1\}}
For a finite subset $\Lambda$ of $\Zdeuxstar$, the Hamiltonian on  $\Lambda$ is defined by
$$\forall \omega \in \Sq^{\Zdeuxstar} \quad H_{\Lambda}(\omega)=-\sum_{\substack{e=\{x,y\}\in\E^2_*\\ e\cap\Lambda\ne\varnothing}} \omega_x\omega_y.$$
Then, we can define, for each bounded measurable function $f$,
\begin{align*}
\forall \omega \in \Sq^{\Zdeuxstar} \quad 
\mathcal{Z}^{\beta}_{\Lambda}(\omega) & =\sum_{\eta\in \Sq^{\Lambda}}\exp(-\beta H_{\Lambda}(\eta_{\Lambda}{\omega}_{\Lambda^c})), \\
{\Pi}^{\beta}_{\Lambda}f({\omega}) & =\frac1{\mathcal{Z}^{\beta}_{\Lambda}({\omega})}
{\miniop{}{\sum}{\eta\in \Sq^{\Lambda}}\exp(-\beta H_{\Lambda}(\eta_{\Lambda}{\omega}_{\Lambda^c}))
f(\eta_{\Lambda}{\omega}_{\Lambda^c})}.
\end{align*}
For each $\omega$, we denote by ${\Pi}^\beta_{\Lambda,\omega}$ the 
probability measure on $\Sq^{\Zdeuxstar}$ which is associated to the map $f\mapsto \Pi^\beta_{\Lambda} f(\omega)$.
When $\beta=0$, colors of sites inside $\Lambda$ are i.i.d. and follow the uniform law in $\{-1,+1\}$. When $\beta>0$, neighbour sites prefer to be in the same color (ferromagnetic case), while when $\beta<0$, neighbour sites prefer to be in different colors (anti-ferromagnetic case).

A Gibbs measure for the Ising model on $\Z^2_*$ with parameter $\beta$ is any probability measure $\gamma$ on $\{-1,+1\}^{\Z^2_*}$ such that for each continuous function, for each finite subset $\Lambda$ of $\Zdeuxstar$, 
$$\int_{\{-1,+1\}^{\Z^2_*}} f\ d\gamma =\int_{\{-1,1\}^{\Z^2_*}} (\Pi^\beta_{\Lambda} f) d\gamma.$$
We denote by $\mathcal{G}(\beta)$ the set of Gibbs measures for the Ising model with parameter $\beta$.
The Ising model presents a phase transition: set $\beta_c=\frac12\log(1+\sqrt 2)$ (see Onsager~\cite{MR0010315}), then 
\begin{itemize}
\item if $0 \le \beta \le  \beta_c$, then there is a unique Gibbs measure;
\item if $\beta>\beta_c$ then there are infinitely many Gibbs measures. The set $\mathcal{G}(\beta)$ is the convex hull of two extremal measures $\gamma_\beta^+$ and $\gamma_\beta^-$, that can be deduced one from the other by exchanging the two colors. This result has been obtained independently by Aizenmann~\cite{MR573615} and Higuchi~\cite{MR712693}. See also Georgii--Higuchi~\cite{MR1757954}.
\end{itemize} 
For $\beta<0$, the Gibbs measures are obtained from $\mathcal{G}(-\beta)$ by changing the colors on the subset of even sites. In other words, if
$$S((\omega)_{\omega\in\Zdeuxstar})=((-1)^{i+j} \omega_{(i,j)})_{(i,j)\in\Zdeuxstar},$$
then $\mu_S=(A\mapsto\mu(S^{-1}(A))$ belongs to $\mathcal{G}(-\beta)$ if and only if $\mu\in\mathcal{G}(\beta)$. For the details, see Chapter~6 in Georgii~\cite{MR956646}.
\begin{figure}  
  \centering
  \begin{tabular}{cc}
   \includegraphics[scale=0.5]{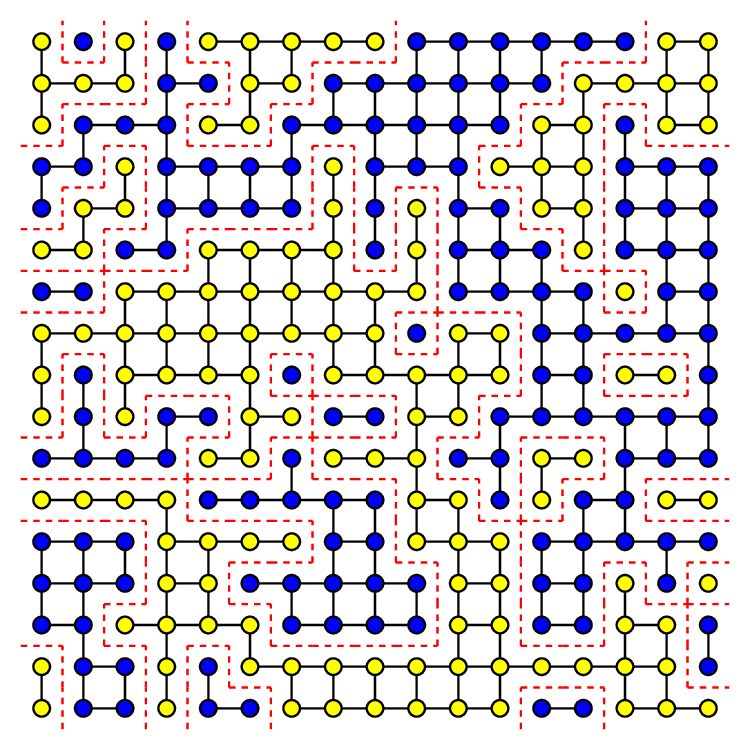}&\includegraphics[scale=0.5]{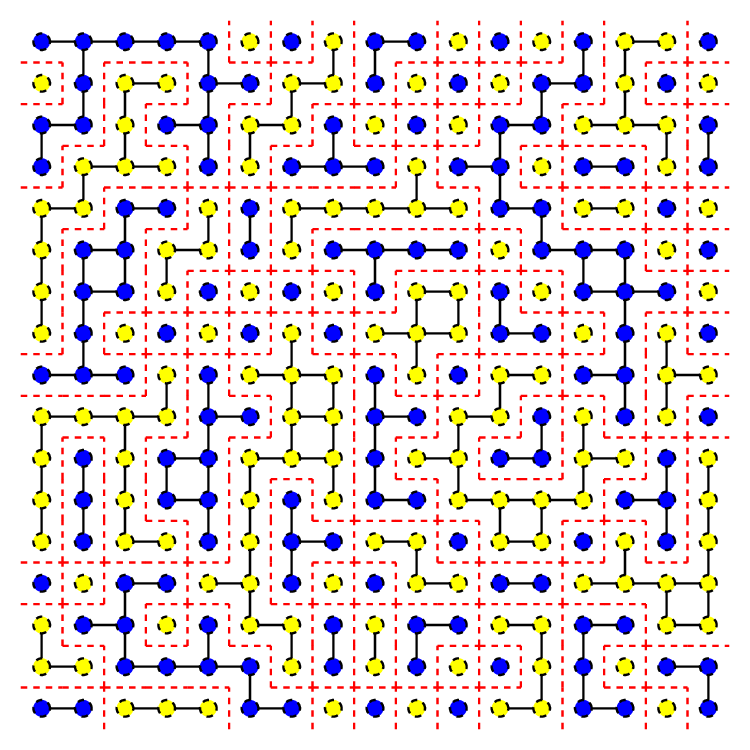}
   \end{tabular} 
  \caption{The mapping $\beta\longleftrightarrow -\beta$}
\end{figure}

\subsection*{Proof of Theorem \ref{THEO-mesure}, existence}
We first prove the existence of Gibbs measure for Eulerian percolation.
Let us define $\Lambda_n=(1/2,1/2)+\{-n,\dots,n\}^2 \subset \Zdeuxstar$ 
and denote by $E(\Lambda_n)$ the set of edges $e$ such that $e_*$ has at least one end in $\Lambda_n$.
Since $\OmegaEP$ is a closed subset of the compact set $\{0,1\}^{\E^2}$, the sequence  $(\mu^p_{E(\Lambda_n),0})_n$ has a limit point $\mu$ with $\mu(\OmegaEP)=1$.
Using Equation~\eqref{compo} and the fact that  $\mu^p_{\Lambda}$ is Feller, it is easy to see that $\mu\in\mathcal{G}_{\textrm{EP}}(p)$, which is therefore not empty. 

This proof is not surprising for people who are familiar to the general theory of Gibbs measures, as described in Georgii~\cite{MR956646}.
Nevertheless, it must be noticed that  $\mu^p_{\Lambda}f$ is not defined on the whole set $\{0,1\}^{\E^2}$ (it is not a specification in the realm of Georgii~\cite{MR956646}), which leads us to mimic a standard proof.
\hfill $\square$

\medskip
To prove the uniqueness of the even percolation probability measure, we first need  the following lemma:

\begin{lemma}
\label{lemmeimage}
Let $c \in \{-1,1\}^{\Zdeuxstar}$ and $\eta\in\OmegaEP$ with $\eta=\contour(c)$. Suppose that $\Lambda_*$ is a simply connected subset of 
$\Zdeuxstar$, and denote by $E(\Lambda_*)$ the set of edges $e$ such that $e_*$ has at least one end in $\Lambda_*$. 

Fix $p \in(0,1)$ and set $\beta=\beta(p)=\frac12\log\frac{1-p}{p}$.
Then, the probability $\mu^p_{E(\Lambda_*),\eta}$ is the image of  $\Pi^{\beta(p)}_{\Lambda_*,c}$ under the contour application $\omega\mapsto\contour(\omega)$.
\end{lemma}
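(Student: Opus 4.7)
The strategy is to reduce the equality of the two measures to (i) a bijection between their supports under the contour map, and (ii) an equality of weights up to a multiplicative constant that cancels in normalization. Let $\mathcal{S}$ denote the set of spin configurations $\sigma \in \{-1,+1\}^{\Zdeuxstar}$ with $\sigma_{\Lambda_*^c} = c_{\Lambda_*^c}$, and $\mathcal{E}$ the set of Eulerian edge configurations $\zeta \in \OmegaEP$ with $\zeta_{E(\Lambda_*)^c} = \eta_{E(\Lambda_*)^c}$; by their defining formulas, these are respectively the supports of $\Pi^{\beta}_{\Lambda_*, c}$ and $\mu^p_{E(\Lambda_*), \eta}$.

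For Step~(i), the inclusion $\contour(\mathcal{S}) \subset \mathcal{E}$ is immediate, since for any primal edge $e \in E(\Lambda_*)^c$ both endpoints of $e_*$ lie in $\Lambda_*^c$, so $\contour(\sigma)_e$ depends only on $c$ and equals $\eta_e$. For the converse, fix $\zeta \in \mathcal{E}$; the paper already observes that $\contour^{-1}(\zeta)$ has exactly two elements $\sigma$ and $-\sigma$. Since $\zeta$ and $\contour(c)$ coincide on $E(\Lambda_*)^c$, integrating $\zeta$ inside $\Lambda_*^c$ along any path forces each preimage to agree with $\pm c$ on each connected component of $\Lambda_*^c$; the simple-connectedness of $\Lambda_*$ implies that $\Lambda_*^c$ is connected in $\Zdeuxstar$, so a single global sign occurs and exactly one preimage lies in $\mathcal{S}$. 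Ensuring a single sign, rather than one per component, is the only delicate step, and this is precisely where the simple-connectedness assumption is needed.

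For Step~(ii), write $\zeta = \contour(\sigma)$ and use the identity $\sigma_{x_*}\sigma_{y_*} = 1 - 2\zeta_e$ whenever $e_* = \{x_*, y_*\}$ to obtain
$$-H_{\Lambda_*}(\sigma) \,=\, \sum_{e \in E(\Lambda_*)}(1 - 2\zeta_e) \,=\, |E(\Lambda_*)| - 2 \sum_{e \in E(\Lambda_*)} \zeta_e.$$
With $\beta = \frac{1}{2}\log\frac{1-p}{p}$ one has $e^{-2\beta} = p/(1-p)$, and hence
$$e^{-\beta H_{\Lambda_*}(\sigma)} \,=\, e^{\beta |E(\Lambda_*)|}\left(\frac{p}{1-p}\right)^{\sum_{e \in E(\Lambda_*)} \zeta_e}.$$
The prefactor is independent of $\sigma$ and disappears after normalization, so the pushforward of $\Pi^{\beta}_{\Lambda_*, c}$ under $\contour$ assigns to each $\zeta \in \mathcal{E}$ a mass proportional to $(p/(1-p))^{\sum_{e \in E(\Lambda_*)} \zeta_e}$, which is precisely $\mu^p_{E(\Lambda_*), \eta}(\{\zeta\})$.
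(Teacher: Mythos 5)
Your proof is correct and follows essentially the same route as the paper: identifying the two preimages of each admissible Eulerian configuration, using the connectedness of $\Lambda_*^c$ (guaranteed by simple-connectedness) to select the unique preimage agreeing with $c$ outside $\Lambda_*$, and converting the Hamiltonian via $\sigma_{x_*}\sigma_{y_*}=1-2\zeta_e$ into the weight $(p/(1-p))^{\sum_{e\in E(\Lambda_*)}\zeta_e}$ up to a constant that cancels by normalization over the common support. The only difference is organizational (explicit support bijection first, weights second, versus the paper's direct computation of the pushforward mass), not mathematical.
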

  
\begin{proof}
By construction, the image of  $\Pi^{\beta}_{\Lambda_*,c}$ under the map $\omega\mapsto\contour(\omega)$ is concentrated on configurations that coincide with $\eta$ outside $E(\Lambda_*)$. Obviously it is the same for $\mu^p_{E(\Lambda_*),\eta}$, so we must focus on the behaviour of the edges in $E(\Lambda_*)$.
      
Let $\eta'\in\OmegaEP$ be such that $\eta$ and $\eta'$ coincide outside $E(\Lambda_*)$.
There are exactly two colorings $c', -c'$ such that $\contour(c')=\contour(-c')=\eta'$. 
If $x$ and $y$ are two neighbours in $(\Lambda^*)^c$, then
$$c_xc_y=1-2\eta_{(x,y)_*}=1-2\eta'_{(x,y)_*}=c'_xc'_y,$$
so $c_x c'_x=c_y c'_y$.
Since $\Lambda_*^c$ is connected, it follows that one of the two colorings, say $c'$, coincides with $c$ on $(\Lambda_*)^c$ (and $-c'$ with $-c$). Thus $\Pi^{\beta}_{\Lambda_*,c}(-c')=0$ and $\Pi^{\beta}_{\Lambda_*,c}(c')>0$, and:
\begin{align*}
& \Pi^{\beta}_{\Lambda_*,c}(\contour(.)=\eta')= \Pi^{\beta}_{\Lambda_*,c}(c') \\
& = \frac1{\mathcal{Z}^{\beta}_{\Lambda_*}(c)} \exp \left( \beta \sum_{\substack{e=\{x,y\}\in\E^2_*\\ e\cap \Lambda_*\ne\varnothing}}  c'_xc'_y\right) 
 = \frac1{\mathcal{Z}^{\beta}_{\Lambda_*}(c)} \exp \left( \beta \sum_{\substack{e=\{x,y\}\in\E^2_*\\ e\cap \Lambda_*\ne\varnothing}} (1-2\eta'_{(x,y)_*}) \right) \\
& = \frac1{\mathcal{Z}^{\beta}_{\Lambda_*}(c)} \exp \left( \beta\sum_{e\in E(\Lambda_*)} (1-2\eta'_e) \right)
 = \frac{\exp(\beta|E(\Lambda_*)|)}{\mathcal{Z}^{\beta}_{\Lambda_*}(c)} \left(\frac{p}{1-p}\right)^{\sum_{e\in  E(\Lambda_*)}\eta'_e}\\
&=\alpha_{\Lambda_*,\eta}\mu^p_{E(\Lambda_*),\eta}(\eta').
\end{align*}
Since we compare probability measures with the same support, $\alpha_{\Lambda^*,\eta}=1$.
\end{proof}

\subsection*{Proof of Theorem \ref{THEO-mesure}, uniqueness}
Let us now see that all Gibbs measures for the Ising model with parameter $\beta$ have the same image by the application $\contour$. Let $\gamma\in\mathcal{G}(\beta)$: there exists $\alpha\in [0,1]$ such that $\gamma=\alpha\gamma_\beta^+ +(1-\alpha)\gamma_\beta^-$. Remember that $\gamma_\beta^-$ is the image of $\gamma_\beta^+$ by the exchange of colors, that leaves the contours unchanged.
So,
if $A\in\mathcal{B}(\{0,1\}^{\E^2})$, 
\begin{align*}
\gamma(\contour\in A) 
& = \alpha\gamma_\beta^+(\contour \in A) + (1-\alpha)\gamma_\beta^-(\contour \in A) \\
& = \alpha\gamma_\beta^+(\contour \in A) + (1-\alpha)\gamma_\beta^+(\contour \in A) = \gamma_\beta^+(\contour\in A)
\end{align*}

Let $\mu\in\mathcal{G}_{EP}(p)$ and set as before $\Lambda_n=(1/2,1/2)+\{-n,\dots,n\}^2$. Let $f$ be a  continuous function on $\{-1,1\}^{\E^2}$, and let us prove that for each $\eta\in\OmegaEP$, 
$$(\mu^p_{E(\Lambda_n)}f)(\eta)\to \int f\circ\contour \ d\gamma_\beta^+.$$
With Equation~\eqref{eqgibbsmu}, it will imply by dominated convergence that
$$\int_{\OmegaEP}f d \mu=\int f\circ\contour \ d\gamma_\beta^+,$$
and thus that $\mu$ is the image by the application $\contour$ of $\gamma_\beta^+$, or of any Gibbs measure for the Ising model with parameter $\beta$.

Let $\eta\in\OmegaEP$ be an Eulerian edge configuration, and let $c \in \{-1,+1\}^{\Zdeuxstar}$ be such that $\contour(c)=\eta$.
Let $x$ be a limiting value of  $((\mu^p_{E(\Lambda_n)}f)(\eta))_{n\ge 1}$.
By extracting a subsequence if necessary, we can assume that $(\Pi_{\Lambda_{n},c})_{n\ge 1}$ converges to $\gamma$, which is then in $\mathcal{G}(\beta)$, and that $x=\displaystyle \lim_{n\to +\infty}(\mu^p_{E(\Lambda_{n})}f)(\eta)$.
By Lemma~\ref{lemmeimage},
\begin{align*}
(\mu^p_{E(\Lambda_{n})}f)(\eta) & =\Pi^{\beta}_{\Lambda_{n},c_\eta}(f\circ\contour), \\
\text{so } x & =\int_{\{-1,1\}^{\Zdeuxstar}} (f\circ\contour)\ d\gamma = \int_{\{-1,1\}^{\Zdeuxstar}} (f\circ\contour) d\gamma_\beta^+.
\end{align*}
To conclude, note that $\gamma_\beta^+$ is stationary and ergodic, and so does $\mu_p$.\hfill $\square$

\section{Unicity of the infinite cluster in Eulerian percolation}

\subsection*{Proof of Theorem \ref{THEO:uni}.}
Since $\mu_p$ is ergodic and $\C$ is a translation-invariant event, it is
obvious that $\mu_p(\C)\in\{0,1\}$. To prove the unicity of the infinite cluster, we now follow the famous proof by Burton and Keane \cite{MR990777}.
The main point here is that the Eulerian percolation measure does not satisfy the finite energy property: once a configuration is fixed outside a box, the even degree condition forbids some configurations inside the box. But the Ising model has the finite energy property, and we will thus use the representation of even percolation in terms of contours of the Ising model. 

The number $N$ of infinite clusters is translation-invariant, so the ergodicity of $\mu_p$ implies that it is $\mu_p$-almost surely constant: there exists $k \in \N \cup\{\infty\}$ such that $\mu_p(N=k)=1$. The first step consists in proving that $k\in \{0,1,\infty\}$.
So assume for contradiction that  $k$ is an integer larger than $2$. Consider a finite box $\Lambda$, large enough to ensure that with positive probability (under $\mu_p$), the box $\Lambda$ intersects at least two infinite clusters. Using Theorem \ref{THEO-mesure}, this implies that with positive probability (under $\gamma_\beta^+$ for the parameter $\beta$ corresponding to $p$), the contours of the Ising model present two infinite connected components that intersect $\Lambda$. But the Ising model has the finite energy property: by forcing the colors inside $\Lambda$ to be a chessboard, we keep an event with positive probability, and we decrease the number of infinite clusters in the contours by at least one. Coming back to Eulerian percolation, this gives $\mu_p(N \le k-1)>0$, which is a contradiction. See \cite{MR620606,MR648202} for the first version of such an argument.

\medskip

In the final step, we prove that $k=\infty$ is impossible. Assume by contradiction that $\mu_p(N=+\infty)=1$. We work now with the colorings of the sites of $\mathbb Z^2_*$, under~$\gamma_{\beta(p)}$.

By taking $L \in \N$ large enough, we can assume that the event $E_L$ ``the box $B_L=[-L,L]^2$ intersects at least $30$ infinite clusters" has positive probability.  Let $\partial \eta_0$ be a coloring of the sites in $\partial_{int}B_L=B_L \backslash B_{L-1}$ such that
$$\gamma_{\beta(p)}(\eta \in E_L, \, \eta_{|\partial_{int}B_L}=\partial \eta_0)>0.$$
Take $\omega$ in this event. Each infinite cluster intersecting $B_L$ crosses
$\partial_{int}B_L$ via an open edge, and this edge sits between a $+1$ site and a $-1$ site. 

Thus the $30$ distinct infinite (edge) clusters intersecting $B_L$ imply the existence of
at least $15$ clusters of $+1$ vertices in $\partial_{int}B_L$. To avoid geometric intricate details, we do not want to consider  $+1$-clusters in $\partial_{int}B_L$  that are in the corners: we thus remove from our  $15$ clusters at most $12=3\times 4$ clusters (the one containing the corner if it is a $+1$, and the nearest $+1$ cluster on each side). We are now left with at least $3$ disjoint $+1$-clusters in $\partial_{int}B_L$, sitting near edges of distinct infinite clusters: they are far away enough so that we can draw, inside $B_{L}$, $3$ paths of sites linking these three clusters to three of the four centers of the sides of $\partial_{int}B_2$, in such a way that two distinct paths are not $*$-connected. See Figure \ref{trifurcation}.

\begin{figure}
\begin{center}
\includegraphics{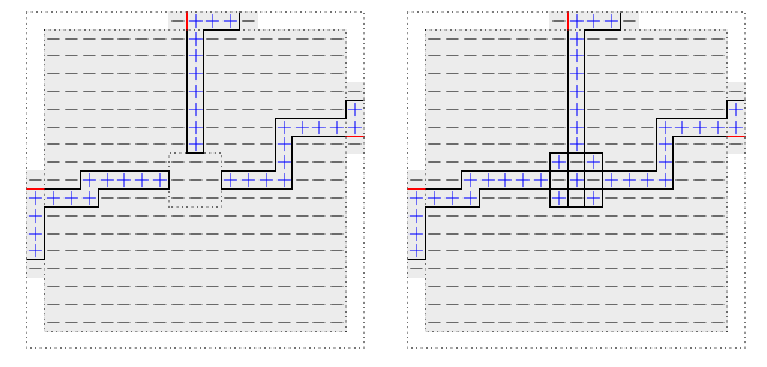}
\end{center}
\caption{Construction of a trifurcation in $B_{L_1}$. Dotted squares are, from inside to outside, $B_{1}$, $B_{L-1}$ and $B_L$. Red edges are, on the left, in three distinct infinite clusters of open edges.} \label{trifurcation}
\end{figure}  

Consider now the following coloring of $B_{L-1}$: all sites in the three paths are $+1$, all the other sites are $-1$. With this coloring, $B_{L-1}$ intersects exactly three infinite clusters of open edges. If we change the coloring of $B_{1}$ in a chessboard, $B_{L-1}$ intersects exactly one infinite cluster of open edges. In this case, we say that $0$ is a trifurcation.
As $\gamma_{\beta(p)}$ has finite energy, we see that the probability that $0$ is a trifurcation has positive probability, and the end of the proof is as in Burton-Keane.

\section{Percolation properties of Eulerian percolation}

The proof of Theorem \ref{THEO:perco} is split into five steps: Lemmas \ref{LEM:un}, \ref{LEM:critique}, \ref{LEM:deux}, \ref{LEM:undeux} and~\ref{LEM:quatre}, that are respectively considering the ranges $(0,p_{c,\mathrm{even}})$, $(p_{c,\mathrm{even}},1/2]$, $(1/2, 1-p_{c,\mathrm{even}})$ and $(1-p_{c,\mathrm{even}},1)$.

\subsection{The ferromagnetic zone of the Ising model: $p\le 1/2$.}


\begin{lemme} \label{LEM:un}
 For $p\in(0,p_{c,\mathrm{even}})$, $\mu_p(\C)=0$.
\end{lemme}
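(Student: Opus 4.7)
The plan is to transport the question to the Ising model on $\Z^2_*$: by Theorem~\ref{THEO-mesure}, $\mu_p$ is the image under the contour map $\contour$ of any Gibbs measure for the Ising model at inverse temperature $\beta(p)$, and since $p\in(0,p_{c,\mathrm{even}})$ corresponds to $\beta(p)>\beta_c$, one may work under $\gamma_{\beta(p)}^+$. It therefore suffices to show that, $\gamma_{\beta(p)}^+$-almost surely, the contour configuration $\contour(\sigma)$ has no infinite cluster.

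The geometric heart of the argument is the observation that every cluster $K$ of $\contour(\sigma)$ is contained in the edge-boundary of a single $*$-connected cluster of $-1$ sites of $\sigma$. Each contour edge has one dual endpoint at $+1$ and one at $-1$; if two contour edges share a primal vertex $x\in\Z^2$, their $-1$ endpoints lie at corners of the unit dual square centered at $x$, and any two such corners are $*$-adjacent (diagonal corners being at $\ell_\infty$-distance one). Propagating this along $K$, the $-1$ endpoints of all edges of $K$ belong to a single $*$-cluster $A_*$ of $-1$ sites, and $K$ is contained in the edge-boundary of $A_*$; in particular $K$ is finite as soon as $A_*$ is.

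It then suffices to invoke the classical result that, for $\beta>\beta_c$, $\gamma_\beta^+$-almost surely there is no infinite $*$-cluster of $-1$ sites (Coniglio, Nappi, Peruggi and Russo; see also Higuchi and Georgii--Higuchi~\cite{MR1757954}). Combined with the previous paragraph, this yields $\mu_p(\C)=0$. The main obstacle is simply locating this input in the literature in precisely the form needed, but it is equivalent, via $2$D planar duality, to the almost sure percolation of the $+1$ sites under $\gamma_\beta^+$ in the supercritical regime, which is classical; the local case analysis underlying the topological claim is entirely elementary.
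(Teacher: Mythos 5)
Your argument is correct and follows essentially the same route as the paper: transfer to the Ising model via Theorem~\ref{THEO-mesure} and associate to an infinite contour cluster an infinite $*$-cluster of constant spin, then invoke the classical fact that for $\beta>\beta_c$ the minority spin does not $*$-percolate (the paper tracks the $+1$ spins along an infinite path and cites Proposition~1 of Russo under $\gamma_\beta^-$, which is exactly the spin-flip of the input you use under $\gamma_\beta^+$). The only caveat is your closing aside: the absence of an infinite minority $*$-cluster is not a mere planar-duality restatement of majority percolation, but since you cite the literature result itself (as the paper does), this does not affect the proof.
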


\begin{proof} Let $\omega$ be a spin configuration of $\{+1,-1\}^{\Z^2_*}$, and let $\eta=\Gamma(\omega)$ be the even subgraph of $\Z^2$ made of the contours of $\omega$.

We need here the notion of $*$-neighbours: two sites $x_*,y_* \in \Z^2_*$ are $*$-neighbours if and only if $\|x_*-y_*\|_\infty =1$. A $*$-chain is then a sequence of sites in $\Z^2_*$ such that two consecutive sites are $*$-neighbours.

Let us assume that $\eta$ contains an infinite path $\gamma$. For each edge along $\gamma$, there is a spin $+1$ in the configuration $\omega$ on one side of that edge, and a spin $-1$ on the other side. The set of spins $+1$ (resp. $-1$) in $\omega$ along $\gamma$ constitutes an infinite $*$-chain of spins $+1$ (resp. $-1$), as illustrated in Figure~\ref{figure:evol_chemin}, which shows the evolution of the $*$-chain of spins $+1$ for the different possible steps taken by $\gamma$. Set 
$$\C_*^+=\{\omega\in\{+1,-1\}^{\Z^2_*}: \mbox{ there is an infinite $*$-chain of spins $+1$ in } \omega\}.$$ 
\begin{figure}
\begin{center}
\includegraphics{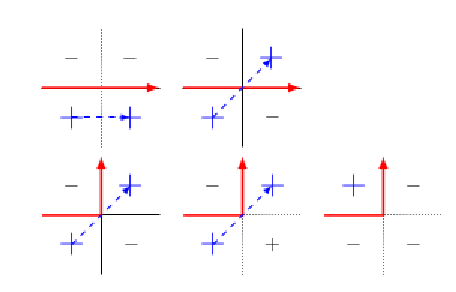}
\end{center}

  \caption{Construction of an $*$-chain of spins $+1$ (dotted arrow) from a infinite path $\gamma$ (full arrow)}\label{figure:evol_chemin}
\end{figure}

It follows from Theorem \ref{THEO-mesure} that for any $p\in (0,1)$,  $\mu_p(\C)\leq \gamma_\beta^-(\C_*^+)$, where $p$ and~$\beta$ are related through the relation $\beta={1\over 2} \log {1-p\over p}$.
By Proposition 1 in Russo~\cite{russo}, we know that if $\beta>\beta_c$, $\gamma_\beta^-(\C_*^+)=0$. It follows that for $p<p_{c,\textrm{even}}$, $\mu_p(\C)=0$.
\end{proof}

\begin{lemme}\label{LEM:critique} At the critical point $p_{c,\mathrm{even}}$, we have:
$\mu_{p_{c,\mathrm{even}}}(\C)=0$.
\end{lemme}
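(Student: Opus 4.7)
The plan is to follow the same strategy as in Lemma~\ref{LEM:un} and reduce the question to one about $*$-percolation of $+1$ spins at the critical Ising point. Since $p_{c,\mathrm{even}}$ corresponds to $\beta=\beta_c$, the Aizenman--Higuchi uniqueness theorem ensures that the Ising Gibbs measure at $\beta_c$ is unique, and by Theorem~\ref{THEO-mesure}, $\mu_{p_{c,\mathrm{even}}}$ is the image under $\contour$ of this unique measure, which I denote $\gamma_{\beta_c}$. The contour graph is locally finite with every vertex of even degree, so any infinite cluster contains an infinite simple path by K\"onig's lemma; reading the $+1$ spins on one side of such a path exactly as in Figure~\ref{figure:evol_chemin} produces an infinite $*$-chain of $+1$ spins. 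This gives
\[
\mu_{p_{c,\mathrm{even}}}(\mathcal{C}) \le \gamma_{\beta_c}(\mathcal{C}_*^+),
\]
and the lemma reduces to showing $\gamma_{\beta_c}(\mathcal{C}_*^+)=0$.

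To obtain the last equality, I would push Russo's Proposition~1 through the boundary $\beta=\beta_c$. Russo gives $\gamma_\beta^-(\mathcal{C}_*^+)=0$ for all $\beta>\beta_c$; FKG stochastic monotonicity of $\gamma_\beta^-$ in $\beta$ together with uniqueness at $\beta_c$ shows that $\gamma_\beta^- \to \gamma_{\beta_c}$ weakly as $\beta\searrow\beta_c$. A naive passage to the limit fails because $\mathcal{C}_*^+$ is not a cylindrical event and the stochastic order $\gamma_\beta^- \preceq \gamma_{\beta_c}$ for $\beta\ge\beta_c$ runs in the direction unfavourable to an increasing event. I would therefore combine the weak convergence with the $\pm$-symmetry of $\gamma_{\beta_c}$. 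By symmetry one has $\gamma_{\beta_c}(\mathcal{C}_*^+)=\gamma_{\beta_c}(\mathcal{C}_*^-)$, so by ergodicity both events would occur almost surely if either had positive probability. A Higuchi-type non-coexistence argument, combined with the absence of regular spin percolation under the symmetric critical measure (itself a consequence of uniqueness and of the vanishing of spontaneous magnetization at $\beta_c$), rules out the simultaneous existence of infinite $*$-chains of $+$ and $-$ spins, and hence forces $\gamma_{\beta_c}(\mathcal{C}_*^+)=0$.

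The main obstacle is precisely this critical-point input. Russo's Proposition~1 only covers the strict range $\beta>\beta_c$, so transferring the vanishing of $\gamma_\beta^-(\mathcal{C}_*^+)$ to the limiting value at $\beta_c$ requires genuinely new information about the Ising model at criticality -- a quantitative Russo-type bound uniform in $\beta$ near $\beta_c$, or a symmetric non-coexistence statement for $*$-clusters of opposite signs under $\gamma_{\beta_c}$. This is the one step where the proof uses that $\beta_c$ is exactly critical, and not merely that $\beta\ge\beta_c$; everything else in the argument is a direct adaptation of Lemma~\ref{LEM:un}.
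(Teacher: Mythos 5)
Your reduction step is fine and is indeed the same move as in Lemma~\ref{LEM:un}: an infinite contour path forces an infinite $*$-chain of $+1$ spins, so $\mu_{p_{c,\mathrm{even}}}(\C)\le\gamma_{\beta_c}(\C_*^+)$ with $\gamma_{\beta_c}$ the unique critical Gibbs measure. But the proof then stops exactly where it would have to do work: you never establish $\gamma_{\beta_c}(\C_*^+)=0$, and you acknowledge this yourself. Moreover, the route you sketch rests on a principle that is false in general: ``symmetry $+$ ergodicity $+$ absence of ordinary spin percolation rules out simultaneous infinite $*$-chains of both signs'' fails throughout the whole subcritical regime, since for every $\beta\in[0,\beta_c)$ the unique Gibbs measure satisfies $\gamma_\beta(\C_*^+)=\gamma_\beta(\C_*^-)=1$ (Higuchi) while $\gamma_\beta(\C^+)=\gamma_\beta(\C^-)=0$ (Coniglio et al.) --- precisely the two facts quoted in the proof of Lemma~\ref{LEM:deux}. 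So a ``Higuchi-type non-coexistence'' statement for $*$-clusters of opposite signs cannot follow from symmetry and non-percolation alone; it must use criticality quantitatively, and that is the ingredient your proposal does not supply. (Your limiting argument from Russo's result is correctly diagnosed as hopeless: the weak convergence $\gamma_\beta^-\to\gamma_{\beta_c}$ as $\beta\searrow\beta_c$ goes in the wrong direction for the increasing, non-cylindrical event $\C_*^+$.)

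The paper takes a completely different and much shorter route, which is why the proof of this lemma is deferred until after Lemma~\ref{LEM:tree}. By that lemma, $\mu_p\preceq\phi^0_{2p,2}$ for $p\le 1/2$; since $2p_{c,\mathrm{even}}=2-\sqrt2=\sqrt2/(1+\sqrt2)=p_c(2)$ is exactly the self-dual (critical) point of FK-percolation with $q=2$, one gets $\mu_{p_{c,\mathrm{even}}}\preceq\phi^0_{p_c(2),2}$, and the absence of an infinite cluster for the critical free-boundary random-cluster measure on $\Z^2$ (Theorem (6.17) in \cite{MR2243761}) kills the increasing event $\C$. In other words, the paper substitutes a known critical-point fact about FK-percolation for the unproven critical-point fact about Ising $*$-percolation that your argument would require; to salvage your approach you would need to actually prove $\gamma_{\beta_c}(\C_*^+)=0$, which is a genuinely harder statement than anything cited in Lemmas~\ref{LEM:un} or~\ref{LEM:deux}.
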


The proof of this lemma will follow from the stochastic comparison between even percolation and the random cluster model stated in Lemma~\ref{LEM:tree}, and can be found just after Lemma \ref{LEM:quatre}. The argument, extended to $p \in [0,  p_{c,\mathrm{even}}]$, also gives an alternative proof of Lemma \ref{LEM:un}.


\begin{lemme} \label{LEM:deux}
For $p\in(p_{c,\mathrm{even}},1/2]$, $\mu_p(\C)=1$.
\end{lemme}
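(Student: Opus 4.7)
The plan is to use Theorem~\ref{THEO-mesure} to identify $\mu_p$ with the image under the contour map $\contour$ of the unique Ising Gibbs measure $\gamma_\beta$ at $\beta:=\beta(p)\in[0,\beta_c)$, and then to show that, under $\gamma_\beta$, the contour graph $\contour(\omega)$ almost surely contains an infinite connected component.

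The main step will be a topological argument converse in spirit to the one used in Lemma~\ref{LEM:un}: the existence of an infinite $*$-cluster of $-1$ spins will force contour percolation. Starting from an infinite $*$-path $(a_n)_{n\geq 0}$ inside such a $*$-cluster $C^-$, I follow it step by step. Whenever $a_n$ and $a_{n+1}$ belong to distinct standard-connected components of $C^-$, they cannot be horizontally or vertically adjacent (that would merge their standard components) and therefore are diagonally adjacent with both intermediate sites being $+1$; a direct inspection of the $\Z^2$-vertex $v_n$ at the common corner shows that $v_n$ has contour-degree $4$, so all four contour edges at $v_n$ lie in a single connected component of the contour graph, chaining the contour edges around $a_n$ with those around $a_{n+1}$. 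Iterating, the infinite sequence of distinct corners $v_n$ contributes infinitely many distinct contour edges to a single connected component of the contour graph.

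It remains to produce, under $\gamma_\beta$ with $\beta\in[0,\beta_c)$, an infinite $*$-cluster of $-1$ spins. At $\beta=0$ the spins are i.i.d.\ uniform and this is immediate, since the critical value for $*$-site percolation on $\Z^2$ equals $1-p_c^{\mathrm{site}}(\Z^2)$, which is strictly less than $1/2$. For $\beta\in(0,\beta_c)$ the Coniglio--Nappi--Peruggi--Russo theorem ensures that $\gamma_\beta$ almost surely has no infinite standard cluster of $+1$ spins, and the classical planar duality between standard and $*$-connectivity for ergodic translation-invariant finite-energy site percolations on $\Z^2$ (in the spirit of Gandolfi--Keane--Russo) then forces an infinite $*$-cluster of $-1$ spins. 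The main obstacle of the plan is the combinatorial verification in the topological step: one must make sure that the iterated chaining through the shared corners $v_n$ really produces a single connected infinite subgraph of the contour graph, paying attention to the fact that an individual standard component of $C^-$ may contribute several distinct contour loops (one outer and possibly several inner ones around $+1$ ``holes'') so that the chain must be kept in the appropriate loop at each step.
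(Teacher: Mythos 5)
Your strategy is essentially the sign-flipped version of the paper's: the paper works with an infinite $*$-chain of $+1$ spins all of whose standard $+1$ clusters are finite (the event $\C_*^+\cap(\C^+)^c$), while you work with an infinite $*$-cluster of $-1$ spins; the local combinatorics you describe (a jump of the $*$-path between distinct standard components must be diagonal with two $+1$ spins on the other diagonal, giving contour degree $4$ at the shared corner) are correct. But your key intermediate claim, \emph{``the existence of an infinite $*$-cluster of $-1$ spins forces contour percolation''}, is false as stated: the all-minus configuration has an infinite $*$-cluster of $-1$ spins and an empty contour. Your chaining only produces contour edges at corners where the $*$-path switches standard components of $C^-$, and nothing in your argument guarantees that there are infinitely many (or any) such switches. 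What is needed in addition is that all standard $-1$ clusters are a.s. finite, so that an infinite $*$-path necessarily meets infinitely many distinct finite components; this is available (Coniglio et al. together with the spin-flip symmetry of the unique Gibbs measure $\gamma_\beta$ for $\beta<\beta_c$), but you never invoke it — it is precisely the role played in the paper by intersecting with $(\C^+)^c$. Relatedly, the ``holes'' obstacle you flag (a finite component whose contour splits into an outer circuit and inner circuits, so that entry and exit corners may sit on different circuits) is genuinely the delicate point of the topological step, and your sketch leaves it unresolved rather than giving the induction along the chain that closes it.

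The second, more serious gap is the probabilistic input. There is no ``classical planar duality'' theorem asserting that, for an ergodic translation-invariant finite-energy site percolation on $\Z^2$, absence of an infinite standard $+$ cluster forces an infinite $*$-cluster of $-$. Critical Bernoulli site percolation on $\Z^2$ (occupied with probability $p_c^{\mathrm{site}}$) is translation invariant, ergodic, of finite energy, and even FKG, yet almost surely it has neither an infinite occupied standard cluster nor an infinite vacant $*$-cluster, because the vacant sites sit exactly at the $*$-critical density $1-p_c^{\mathrm{site}}$; Gandolfi--Keane--Russo-type results give uniqueness/non-coexistence statements, not existence. So you cannot deduce the $*$-percolation of $-1$ spins from Coniglio et al. by soft duality: you need the Ising-specific theorem that the paper cites (Higuchi, Theorem 1), namely $\gamma_\beta(\C_*^+)=1$ for $\beta\in[0,\beta_c)$, which by spin-flip symmetry of the unique Gibbs measure also yields the infinite $*$-cluster of $-1$ spins. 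With Higuchi's theorem and the finiteness of the relevant standard clusters added, your argument becomes a correct (sign-reversed) variant of the paper's proof; as written, it has these two genuine gaps.
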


\begin{proof} Let us set $$\C^+=\{\omega\in\{+1,-1\}^{\Z^2_*}:\;  \mbox{ there is an infinite } \mbox{chain of spins $+1$ in } \omega\}.$$

Let $\omega\in\C_*^+\cap (\C^+)^c$, and let $\delta$ be an infinite $*$-chain of spins $+1$ in $\omega$. For each spin $+1$ along $\delta$, let us consider the cluster of spins $+1$ to which it belongs. Since $\omega\not\in \C^+$, these clusters are finite. The union of the contours of these clusters is an infinite connected subgraph of $\Z^2$. Indeed, let $x_1, x_2\in \Z^2_*$ be the coordinates of two consecutive spins $+1$ of the $*$-chain $\delta$. If $\omega(x_1)$ and $\omega(x_2)$ are not in the same cluster of spins $+1$, it means that the step from $x_1$ to $x_2$ in $\delta$ is diagonal (with spins $-1$ in the opposite diagonal), and that the contours of the clusters of $\omega(x_1)$ and $\omega(x_2)$ meet at point $(x_1+x_2)/2$. Thus, any two consecutive points of $\delta$ are such that the contours of their clusters are connected (or possibly the same). By induction, one can then prove that the union of the contours of all the clusters of spins $+1$ of $\delta$ is a connected subgraph of $\Z^2$. 

It follows from Theorem \ref{THEO-mesure}
that for any $p \in (0,1)$, $\mu_p(\C)\geq \gamma_\beta^+(\C_*^+\cap (\C^+)^c)$.
For $\beta\in[0,\beta_c)$, we have $\gamma_\beta^+=\gamma_\beta^-=\gamma_\beta$, and 
\begin{itemize}
\item $\gamma_\beta^+(\C^+)=0$, by Proposition 1 in~\cite{coniglio},
\item $\gamma_\beta^+(\C_*^+)=1$, by Theorem 1 in~\cite{higuchi}.
\end{itemize}
Thus, $\gamma_\beta^+(\C_*^+\cap (\C^+)^c)=1$. It follows that for $p\in(p_c,1/2]$, $\mu_p(\C)=1$.
\end{proof}

\subsection{The antiferromagnetic zone of the Ising model: $p> 1/2$}
This is the most complex case, because the geometry of the antiferromagnetic Ising model is less well known.

\subsubsection{Percolation for $p\in (1/2,1/\sqrt{2})$}

We want here to build, for $p<1/2$, a coupling between $\mu_p$ and $\mu_{1-p}$ that increases connectivity:
\begin{lemme} \label{LEM:domstoch}
  Let $p\in (0,1/2)$. The law of the field  $(\1_{\{x\communique y\}})_{(x,y)\in \Zdeux\times\Zdeux}$ under $\mu_p$ is stochastically dominated by the law of the field  $(\1_{\{x\communique y\}})_{(x,y)\in \Zdeux\times\Zdeux}$ under $\mu_{1-p}$.
\end{lemme}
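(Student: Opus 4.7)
The plan is, by Strassen's theorem, to exhibit a coupling $(\omega,\tilde\omega)$ with $\omega\sim\mu_p$ and $\tilde\omega\sim\mu_{1-p}$ such that almost surely $\{x\communique y\text{ in }\omega\}\subseteq\{x\communique y\text{ in }\tilde\omega\}$ for every pair $x,y\in\Zdeux$.

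The first ingredient is the spin representation from Theorem \ref{THEO-mesure} combined with the even-sublattice flip $S$ introduced in Section~2. I would take $\sigma\sim\gamma_{\beta(p)}^+$ and set $\omega:=\Gamma(\sigma)$, so $\omega\sim\mu_p$. Since two neighbors $x_*,y_*\in\Zdeuxstar$ have opposite $(i+j)$-parities, the transformation $S$ negates every local spin product, i.e.\ $S(\sigma)_{x_*}S(\sigma)_{y_*}=-\sigma_{x_*}\sigma_{y_*}$; consequently $\Gamma(S\sigma)_e=1-\Gamma(\sigma)_e$ for every $e\in\E^2$. Together with $S_*\gamma_{\beta(p)}^+\in\mathcal G(-\beta(p))=\mathcal G(\beta(1-p))$ and Theorem \ref{THEO-mesure}, this shows that the complementary configuration $\tilde\omega_0:=\mathbf 1-\omega$ has law $\mu_{1-p}$.

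The geometric claim attached to this ``complementation coupling'' is the following: along any $\omega$-open path from $x$ to $y$ one traces the common interface between a $+$- and a $-$-monochromatic region of $\sigma$, so $x$ and $y$ both border a common monochromatic region $R$; the interior edges of $R$ are by definition exactly those where $\sigma$ does not change, i.e.\ the $\tilde\omega_0$-open edges, providing a $\tilde\omega_0$-path from $x$ to $y$ through the interior of $R$ (or the infinite complementary region of opposite sign, which exists almost surely for $\beta(p)>0$ by the ferromagnetic theory used in Lemmas \ref{LEM:un}--\ref{LEM:deux}).

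The main obstacle is the treatment of the $\omega$-degree-$4$ \emph{saddle} vertices, namely sites $x\in\Zdeux$ whose four surrounding cells of $\sigma$ form a $2\times 2$ checkerboard. At such an $x$, all four incident edges are $\omega$-open and all four are $\tilde\omega_0$-closed, so $x$ is isolated in $\tilde\omega_0$ and every $\omega$-connection through $x$ is destroyed. Saddles occur with strictly positive density under $\gamma_{\beta(p)}^+$ by ergodicity, so they cannot simply be ignored. To repair the coupling I would insert, independently at each saddle, a short Eulerian ``bypass'' supported in a bounded neighborhood of $x$, drawn from a conditional distribution carefully tailored to preserve the marginal $\mu_{1-p}$. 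The cleanest way to define such a bypass is at the level of the finite-volume specifications $\mu^p_{E(\Lambda_*)}$ and $\mu^{1-p}_{E(\Lambda_*)}$, using the explicit Gibbs weights provided by Lemma \ref{lemmeimage} to set up an edge-matching between local configurations with and without a saddle, and then to pass to the infinite volume using the uniqueness part of Theorem \ref{THEO-mesure}. Checking that such a matching can be consistently defined while keeping both marginals correct is the principal technical point of the proof.
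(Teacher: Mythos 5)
Your coupling does produce the right marginals: taking $\sigma\sim\gamma^+_{\beta(p)}$, $\omega=\Gamma(\sigma)$ and $\tilde\omega_0=\mathbf 1-\omega=\Gamma(S\sigma)$ indeed gives $\omega\sim\mu_p$ and $\tilde\omega_0\sim\mu_{1-p}$, since $\beta(1-p)=-\beta(p)$. But the proof is not complete, and the gap is exactly the one you name and then defer. Global complementation does \emph{not} increase connectivity: every edge of an $\omega$-open path is $\tilde\omega_0$-closed, so the domination has to come from a genuinely different route in $\tilde\omega_0$, and your geometric claim that $x$ and $y$ ``border a common monochromatic region $R$ whose interior edges connect them'' is not correct as stated. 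A single-site region has no interior edges at all, and, more seriously, the squares lying on a fixed side of the contour path form in general only a $*$-connected family, not a connected one; precisely at the checkerboard (saddle) vertices the parallel path in $\tilde\omega_0$ breaks down, and such vertices occur with positive density, as you observe. Your proposed repair --- inserting, at each saddle, a local Eulerian bypass ``drawn from a conditional distribution carefully tailored to preserve the marginal $\mu_{1-p}$'' --- is the entire difficulty, not a technical afterthought: the set of saddles is determined by the configuration itself, so any local resampling indexed by it biases the law away from $\mu_{1-p}$ unless one constructs and verifies a new coupling from scratch; you give no construction and no argument that one exists. As written, the proof establishes the marginals but not the almost-sure implication $\{x\communique y \text{ in }\omega\}\subseteq\{x\communique y\text{ in }\tilde\omega\}$.

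For comparison, the paper's proof avoids the saddle issue by never complementing globally. It partitions $\E_2$ into the $2\times2$ blocks $E_x$ of edges surrounding the dual sites $x$ of one sublattice, and couples $\Ber(p)^{\otimes E_x}$ with $\Ber(1-p)^{\otimes E_x}$ blockwise and independently, complementing the block only when it carries $0$ or $1$ open edge (and keeping it unchanged otherwise). This local rule preserves the parity of the degree at each corner of the block and weakly increases connectivity inside the block; because parities are preserved, conditioning on the Eulerian event acts identically on both coordinates, so the conditioned finite-volume marginals are exactly $\mu^p_{E(\Lambda'_n),0}$ and $\mu^{1-p}_{E(\Lambda'_n),0}$, and a compactness argument together with the uniqueness statement of Theorem~\ref{THEO-mesure} identifies the limits as $\mu_p$ and $\mu_{1-p}$ while keeping the connectivity comparison. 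If you want to salvage your approach, you would essentially have to rediscover a mechanism of this kind (complement only where it helps, in a configuration-independent spatial pattern, and check parity preservation so that the Eulerian conditioning factorizes); the global spin-flip coupling by itself cannot give the lemma.
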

By Lemma \ref{LEM:deux}, there is percolation under $\mu_p$ for $p\in (p_{c,\mathrm{even}},1/2]$, so we deduce:
\begin{lemme} \label{LEM:undeux}
For $p\in(1/2,1-p_{c,\mathrm{even}})$, $\mu_p(\C)=1$.
\end{lemme}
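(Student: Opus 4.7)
The plan is to deduce Lemma~\ref{LEM:undeux} as an essentially immediate corollary of the stochastic comparison in Lemma~\ref{LEM:domstoch} together with the supercritical percolation result Lemma~\ref{LEM:deux}.

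First I would fix $p\in(1/2,\,1-p_{c,\mathrm{even}})$ and set $q:=1-p$, so that $q\in(p_{c,\mathrm{even}},\,1/2)$. By Lemma~\ref{LEM:deux} applied at $q$, we already know that $\mu_q(\C)=1$.

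Next I would rewrite the percolation event as a monotone functional of the connectivity field. The key observation is that the existence of an infinite open cluster is determined by the field $(\1_{\{x\communique y\}})_{(x,y)\in\Zdeux\times\Zdeux}$ through
$$\C \;=\; \bigcup_{x\in\Zdeux}\Bigl\{\,\#\{y\in\Zdeux:\ x\communique y\}=\infty\,\Bigr\},$$
and each event in this union is non-decreasing in the indicators $\1_{\{x\communique y\}}$: turning on more connections can only enlarge the cluster of $x$, never shrink it. Hence $\1_\C$ is a bounded, non-decreasing function of this field. Applying Lemma~\ref{LEM:domstoch} at parameter $q\in(0,1/2)$ then yields
$$\mu_p(\C)\;=\;\mu_{1-q}(\C)\;\ge\;\mu_q(\C)\;=\;1,$$
which proves the lemma.

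I do not anticipate any real obstacle in this step: once Lemma~\ref{LEM:domstoch} is in hand, the argument reduces to the observation that $\C$ is an increasing event in the connectivity field, which is immediate from the definition of a cluster. All the substantive work is concentrated in Lemma~\ref{LEM:domstoch}, where one has to actually construct a coupling between $\mu_p$ and $\mu_{1-p}$ whose marginals satisfy the connectivity domination; the present lemma is simply a harvesting of that domination on the tail event $\C$.
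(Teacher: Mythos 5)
Your proposal is correct and is essentially the paper's own argument: the paper deduces Lemma~\ref{LEM:undeux} in exactly this way, by applying the connectivity domination of Lemma~\ref{LEM:domstoch} (with parameter $1-p\in(p_{c,\mathrm{even}},1/2)$) to the increasing event $\C$ and invoking Lemma~\ref{LEM:deux}. Your explicit remark that $\1_\C$ is a non-decreasing functional of the field $(\1_{\{x\communique y\}})$ is the only detail the paper leaves implicit.
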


\begin{proof}[Proof of Lemma \ref{LEM:domstoch}]
For every site $x \in \Z^2+(1/2,1/2)$, we consider the set $E_x \subset \E_2$ of its four surrounding edges, \ie the four edges of the unit square with center~$x$. Define $G_2=\{(x_1,x_2)\in\Zdeux+(1/2,1/2): \;  x_1+x_2\in 2\Z\}$. Then $\E_2$  is the disjoint union of the $E_x$ for $x \in G_2$.

We define $\Omega_x=(\{0,1\}\times \{0,1\})^{E_x}$. A point $(\omega_e,\tilde \omega_e)_{e \in E_x} \in (\{0,1\}\times \{0,1\})^{E_x}$ encodes two configurations of the four edges surrounding $x$: $(\omega_e)_{e \in E_x}$ and $(\tilde \omega_e)_{e \in E_x}$. 

For $(\omega_e)_{e \in E_x}$ let us set $|\omega|=\sum_{e\in E_x} {\omega_e}$. 

\medskip
1. We first define a probability measure $P$ on $\Omega_x=(\{0,1\}\times \{0,1\})^{E_x}$, whose first marginal is $\Ber(p)^{\otimes E_x}$, and whose second marginal is $\Ber(1-p)^{\otimes E_x}$. This probability $P$ is defined by the table below, and has the property that $P$-almost surely, either $(\omega_e)_{e \in E_x}=(\tilde \omega_e)_{e \in E_x}$, or the configuration $(\tilde \omega_e)_{e \in E_x}$ is the complement of $(\omega_e)_{e \in E_x}$, which can be interpreted as the flip of the spin at $x$. This is possible since for any $(\alpha_e)_{e \in E_x}\in\{0,1\}^{E_x}$, 
\begin{align*}
& \Ber(p)^{\otimes E_x}((\alpha_e)_{e \in E_x})+\Ber(p)^{\otimes E_x}((1-\alpha_e)_{e \in E_x})\\
= & \Ber(1-p)^{\otimes E_x}((1-\alpha_e)_{e \in E_x})+\Ber(1-p)^{\otimes E_x}((\alpha_e)_{e \in E_x})\\
= & p^{|\alpha|}(1-p)^{4-|\alpha|}+p^{4-|\alpha|}(1-p)^{|\alpha|}.
\end{align*}

In particular, $P$ is such that there are the following possibilities for $(|\omega|,|\tilde\omega|)$:
\begin{itemize}
\item with probability $p^4+(1-p)^4$, we have $(|\omega|,|\tilde\omega|)\in\{0,4\}^2$,
\item with probability $4(p(1-p)^3+(1-p)p^3)$, we have $(|\omega|,|\tilde\omega|)\in\{1,3\}^2$,
\item with probability $6p^2(1-p)^2$, we have $|\omega|=|\tilde\omega|=2$.
\end{itemize}

\smallskip

\noindent
\begin{tabular}{|>{\centering\arraybackslash}m{4cm}||>{\centering\arraybackslash}m{1.5cm}|>{\centering\arraybackslash}m{1.5cm}||>{\centering\arraybackslash}m{2cm}|>{\centering\arraybackslash}m{1.5cm}|}

\hline

 & $(\omega_e)_{e \in E_x}$ & $(\tilde \omega_e)_{e \in E_x}$ & probability under $P$ & number of cases \\ 

\hline
\hline

$|\omega|=|\tilde\omega|=0$ \newline $(\omega_e)_{e \in E_x}=(\tilde \omega_e)_{e \in E_x}$
& \espace \begin{tikzpicture}\draw[blue, dashed] (0,0) -- (1,0) -- (1,1) -- (0,1) -- (0,0) ;\end{tikzpicture}
&  \espace \begin{tikzpicture}\draw[blue, dashed] (0,0) -- (1,0) -- (1,1) -- (0,1) -- (0,0) ;\end{tikzpicture} & 
$p^4$ 
& 1 \\
  
\hline

$|\omega|=0, |\tilde\omega|=4$ \newline $(\omega_e)_{e \in E_x}=(1-\tilde \omega_e)_{e \in E_x}$
&  \espace \begin{tikzpicture}\draw[blue, dashed] (0,0) -- (1,0) -- (1,1) -- (0,1) -- (0,0) ;\end{tikzpicture} 
&  \espace \begin{tikzpicture}\draw[blue, very thick] (0,0) -- (1,0) -- (1,1) -- (0,1) -- (0,0) ;\end{tikzpicture}  
& $(1-p)^4-p^4$  
& 1  \\

\hline

$|\omega|=|\tilde\omega|=4$ \newline $(\omega_e)_{e \in E_x}=(\tilde \omega_e)_{e \in E_x}$
&  \espace \begin{tikzpicture}\draw[blue, very thick] (0,0) -- (1,0) -- (1,1) -- (0,1) -- (0,0) ;\end{tikzpicture} 
&  \espace \begin{tikzpicture}\draw[blue, very thick] (0,0) -- (1,0) -- (1,1) -- (0,1) -- (0,0) ;\end{tikzpicture} 
& $p^4$  
& 1 \\

\hline
\hline

$|\omega|=|\tilde\omega|=1$ \newline $(\omega_e)_{e \in E_x}=(\tilde \omega_e)_{e \in E_x}$
&  \espace \begin{tikzpicture}\draw[blue, very thick] (0,0) -- (0,1) ; \draw[blue, dashed] (0,0) -- (1,0) -- (1,1) -- (0,1) ;\end{tikzpicture} 
&  \espace \begin{tikzpicture}\draw[blue, very thick] (0,0) -- (0,1) ; \draw[blue, dashed] (0,0) -- (1,0) -- (1,1) -- (0,1) ;\end{tikzpicture} 
& $p^3(1-p)$ 
& ${4 \choose 1}=4$ \\

\hline

$|\omega|=1, |\tilde\omega|=3$ \newline $(\omega_e)_{e \in E_x}=(1-\tilde \omega_e)_{e \in E_x}$
&  \espace \begin{tikzpicture}\draw[blue, very thick] (0,0) -- (0,1) ; \draw[blue, dashed] (0,0) -- (1,0) -- (1,1) -- (0,1) ;\end{tikzpicture}  
& \espace \begin{tikzpicture}\draw[blue, dashed] (0,0) -- (0,1) ; \draw[blue, very thick] (0,0) -- (1,0) -- (1,1) -- (0,1) ;\end{tikzpicture} 
& $p(1-p)^3-p^3(1-p)$ 
& ${4 \choose 1}=4$ \\

\hline

$|\omega|=|\tilde\omega|=3$ \newline $(\omega_e)_{e \in E_x}=(\tilde \omega_e)_{e \in E_x}$
& \espace \begin{tikzpicture}\draw[blue, dashed] (0,0) -- (0,1) ; \draw[blue, very thick] (0,0) -- (1,0) -- (1,1) -- (0,1) ;\end{tikzpicture} 
& \espace \begin{tikzpicture}\draw[blue, dashed] (0,0) -- (0,1) ; \draw[blue, very thick] (0,0) -- (1,0) -- (1,1) -- (0,1) ;\end{tikzpicture} 
& $p^3(1-p)$ & ${4 \choose 1}=4$ \\

\hline
\hline

$|\omega|=|\tilde\omega|=2$ \newline $(\omega_e)_{e \in E_x}=(\tilde \omega_e)_{e \in E_x}$
&  \espace \begin{tikzpicture}\draw[blue, dashed] (1,0) -- (0,0) -- (0,1) ; \draw[blue, very thick] (1,0) -- (1,1) -- (0,1) ;\end{tikzpicture} 
& \espace \begin{tikzpicture}\draw[blue, dashed] (1,0) -- (0,0) -- (0,1) ; \draw[blue, very thick] (1,0) -- (1,1) -- (0,1) ;\end{tikzpicture} 
& $p^2(1-p)^2$ 
& ${4 \choose 2}=6$\\

\hline

\end{tabular}

\medskip
Because $p<1/2$ and thus $p<1-p$, the probability measure $P$ is well defined. 
One can easily check that $P$ has the following properties.
\begin{enumerate}
\item[(P1)] The law of $(\omega_e)_{e \in E_x}$ under $P$ is $\Ber(p)^{\otimes E_x}$, and the law of $(\tilde \omega_e)_{e \in E_x}$ under $P$ is $\Ber(1-p)^{\otimes E_x}$.
\item[(P2)] $(\tilde \omega_e)_{e \in E_x}$ is more connected than $(\omega_e)_{e \in E_x}$: $P$-almost surely, if two corners of the squares are connected in $(\omega_e)_{e \in E_x}$, then they are connected in $(\tilde \omega_e)_{e \in E_x}$. 
\item[(P3)] $P$-almost surely, the parity of the degree of each corner of the square is the same in the two configuration $(\omega_e)_{e \in E_x}$ and $(\tilde \omega_e)_{e \in E_x}$.
\end{enumerate}
Note however that the coupling is not increasing: with probability $p(1-p)^3-p^3(1-p)>0$, $(\omega_e)_{e \in E_x}$ and $(\tilde \omega_e)_{e \in E_x}$ are not comparable.

\medskip
2. We now extend the previous coupling to finite boxes of $\E_2$. Define, for $n \ge 1$, $\Lambda'_n=\{x \in G_2: \;  \|x\|_\infty \le n\}$ and denote by $E(\Lambda'_n)$ the subset of edges $e\in \E_2$ such that $e_*$ has at least one end in $\Lambda_n$. Then $E(\Lambda'_n)$ is the disjoint union of the $E_x$ for $x \in \Lambda'_n$.
Set $\Delta_0= (\delta_0\otimes\delta_0)^{\otimes E_x}$.

Thus, $Q_n=P^{\otimes  \Lambda'_n} \otimes \Delta_0^{\otimes  G_2 \backslash \Lambda'_n}$ is a probability measure on $(\{0,1\}\times \{0,1\})^{\E_2}$, where  $(\omega_e,\tilde \omega_e)_{e \in \E_2} \in (\{0,1\}\times \{0,1\})^{\E_2}$ encodes two edges configurations on the whole plane: $\omega= (\omega_e)_{e \in \E_2}$ and $\tilde \omega= (\tilde \omega_e)_{e \in \E_2}$. 
From Properties (P1), (P2) and (P3), one gets:
\begin{enumerate}
\item[(P1')] The law of $\omega$ under $Q_n$ is $\Ber(p)^{\otimes E(\Lambda'_n)} \otimes \delta_0^{\otimes \E_2 \backslash E(\Lambda'_n)} $, and the law of $\tilde \omega$ under $Q_n$ is $\Ber(1-p)^{\otimes E(\Lambda'_n)}\otimes \delta_0^{\otimes \E_2 \backslash E(\Lambda'_n)}$.
\item[(P2')] $Q_n$ almost surely, if $x \stackrel{\omega}{\leftrightarrow} y$ then $x \stackrel{\tilde \omega}{\leftrightarrow} y$.
\item[(P3')] $Q_n$ almost surely, $\omega \in \OmegaEP\iff \tilde \omega \in\OmegaEP$. 
\end{enumerate}

3. Now we want to condition $Q_n$ by the event that both configurations $\omega$ and $\tilde \omega$ are even. By Property (P3'), we have
$$\overline{Q}_n(.) \stackrel{def}{=}Q_n(.|\omega \in \OmegaEP, \; \tilde \omega \in \OmegaEP)=
Q_n(.|\omega \in \OmegaEP)=Q_n(.|\tilde \omega \in \OmegaEP).$$
Remember the definition of $\mu^p_{E(\Lambda'_n),0}$. With Property (P1'), one gets
\begin{align*}
\overline{Q}_n(\omega_{E(\Lambda'_n)} \in A)
& = \frac{Q_n(\omega_{E(\Lambda'_n)} \in A,\; \omega\in\OmegaEP)}{Q_n(\omega\in\OmegaEP)} = \mu^p_{E(\Lambda'_n),0}(A).
\end{align*}
In the same manner, $\overline{Q}_n(\tilde \omega_{E(\Lambda'_n)} \in A)=\mu^{1-p}_{E(\Lambda'_n),0}(A)$. And we obtain
\begin{enumerate}
\item[(P1'')] The law of $\omega$ under $\overline{Q}_n$ is $\mu^p_{E(\Lambda'_n),0}$, and the law of $\tilde \omega$ under $\overline{Q}_n$ is $\mu^{1-p}_{E(\Lambda'_n),0}$.
\item[(P2'')] $\overline{Q}_n$ almost surely, if $x \stackrel{\omega}{\leftrightarrow} y$ then $x \stackrel{\tilde \omega}{\leftrightarrow} y$.
\end{enumerate}

4. It remains to take limits when $n$ goes to $+\infty$. 
We can extract a subsequence $(n_k)$ such that $\overline{Q}_{n_k}$ converges to a probability measure $\overline Q$ when $k$ tends to infinity. Thus
both marginals $\mu^p_{E(\Lambda'_{n_k}),0}$ and $\mu^{1-p}_{E(\Lambda'_{n_k}),0}$ also  converge when $k$ tends to infinity to the marginals of $\overline Q$. As in the proof of Theorem~1.1, their limits are Gibbs measures for even percolation, so by uniqueness, they respectively converge to $\mu_p$ and $\mu_{1-p}$. Thus,
\begin{enumerate}
\item[(P1''')] The law of $\omega$ under $\overline{Q}$ is $\mu_p$, and the law of $\tilde \omega$ under $\overline{Q}$ is $\mu_{1-p}$.
\item[(P2''')] $\overline{Q}$ almost surely, if $x \stackrel{\omega}{\leftrightarrow} y$ then $x \stackrel{\tilde \omega}{\leftrightarrow} y$.
\end{enumerate}
So the law of the field  $(\1_{\{x\communique y\}})_{(x,y)\in \Zdeux\times\Zdeux}$ under $\mu_p$ is stochastically dominated by the law of the field  $(\1_{\{x\communique y\}})_{(x,y)\in \Zdeux\times\Zdeux}$  under $\mu_{1-p}$.
\end{proof}

\subsubsection{Percolation for $p>1/\sqrt{2}$}

In the following, we give a full proof of the fact that percolation occurs for $p\ge 3/4$, and give some hints about the way to prove that there is percolation for $p> 1/\sqrt{2}$.

The proof is based on a coupling between the Ising model and the random cluster (or FK-percolation) model. We just recall a few results on the random cluster model, and refer to Grimmett's book \cite{MR2243761} for a complete survey on this model.

The random cluster measure with parameters $p$ and $q$ on a finite graph $G=(V,E)$ is the probability measure on $\{0,1\}^E$ defined by:
$$\phi^G_{p,q}(\eta)={1\over Z}{\Big({p\over 1-p}\Big)}^{\sum_{i\in E}{\eta_i}}q^{k(\eta)},$$
where $k(\eta)$ is the number of connected components in the subgraph of $G$ given by $\eta$, and $Z$ is a normalizing constant.

On $\Z^2$, it is known that at least for $p\not={\sqrt{q}\over 1+\sqrt{q}}$, there exists a unique infinite 
volume random cluster measure, that we denote by $\phi_{p,q}$ (Theorem (6.17) in \cite{MR2243761}). It is a probability measure on $\{0,1\}^{\E^2}$. 
In our study of even percolation, we use two properties of the random cluster model: its link with the Ising model, and its duality property.
For $\beta>0, \;\beta\not=\beta_c$, let us set 
\begin{equation*} 
f(\beta)=1-\exp(-2\beta). 
\end{equation*}

{(Q1)} From a spin configuration $\omega\in \{+1,-1\}^{\Z^2}$ whose distribution is any Gibbs measure~$\gamma_{\beta}$ for the Ising model with parameter $\beta\geq 0$, one obtains a subgraph $\eta\in \{0,1\}^{\E^2}$ with distribution $\phi_{f({\beta}),2}$ by keeping independently each edge between identical spins with probability $f(\beta)$, and erasing all the edges between different spins. For finite graphs, this can be found in Theorem (1.13) in \cite{MR2243761}. For the $\Zd$ case,  Theorem (4.91)  in \cite{MR2243761} says that this erasing procedure allows to couple the wired boundary infinite volume random cluster measure $\phi^1_{f({\beta}),2}$ and the Ising measure $\gamma_\beta^+$ on $\Z^2$. 

For a subgraph $\eta\in\{0,1\}^{\E^2}$, we denote by $\eta^c\in\{0,1\}^{\E^2}$ the complementary subgraph of $\Z^2$, meaning that the open edges of $\eta^c$ are exactly the closed edges of~$\eta$. We denote by $\eta_*\in \{0,1\}^{\E^2_*}$ the dual graph of $\eta$: in $\eta_*$, the edge $e_*$ is open if and only if $e$ is closed. Let us point out that $(\eta^c)_*=(\eta_*)^c$: we thus simply denote this graph by $\eta^{c}_*$. We naturally extend these notations to measures.

{(Q2)} The random cluster model has the following duality property (Theorem (6.13) in~\cite{MR2243761}): if $\eta$ is distributed according to $\phi^1_{p,2}$,  then the distribution $(\phi^1_{p,2})_*$ of $\eta_*$ is equal to $\phi^0_{p^*,2}$, where:
$${p^*\over 1-p^*}=2 \; {1-p\over p} \quad \Leftrightarrow \quad p^*={2-2p\over 2-p}.$$

Let us also recall that the measure $\mu_p$ on the edges of $\Z^2$ is obtained as the contours of any Ising measure with parameter $\beta(p)$ on $\Z^2_*$, and in particular as the contours of $\gamma^+_{\beta(p)}$ (Theorem~\ref{THEO-mesure}). Using these facts, we will prove the following proposition.

\begin{lemme}  \label{LEM:tree}
For $p\leq 1/2$, we have the following stochastic ordering:
$$\mu_p\preceq \phi^0_{2p,2}, \quad \mbox{ or equivalently, } \quad (\phi^0_{2p,2})^c\preceq \mu_{1-p}.$$
\end{lemme}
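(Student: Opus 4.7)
The plan is to combine the Edwards--Sokal coupling of property~(Q1) with the contour representation of $\mu_p$ from Theorem~\ref{THEO-mesure} and the planar self-duality of FK-percolation from property~(Q2). Concretely, I would sample $\sigma \in \{+1,-1\}^{\Z^2_*}$ according to $\gamma_{\beta(p)}^+$ and then construct a random subgraph $\eta \in \{0,1\}^{\E^2_*}$, conditionally on $\sigma$, by independently keeping every dual edge between two equal spins with probability $f(\beta(p))$ and erasing all dual edges between different spins. Property~(Q1) ensures $\eta \sim \phi^1_{f(\beta(p)),2}$, while Theorem~\ref{THEO-mesure} gives $\contour(\sigma) \sim \mu_p$.

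The decisive observation is purely deterministic: if $\contour(\sigma)(e)=1$ for some $e\in\E^2$, then by definition of the contour map the two spins at the endpoints of $e_*$ disagree, so the Edwards--Sokal rule forces $\eta(e_*)=0$. In the notation of the excerpt this reads $\contour(\sigma)(e)\le \eta_*(e)$ for every $e\in\E^2$, so the joint law realises the pointwise domination $\contour(\sigma)\preceq \eta_*$. By property~(Q2), $\eta_*$ has distribution $\phi^0_{r^*,2}$ with $r=f(\beta(p))$. Since $e^{-2\beta(p)}=p/(1-p)$, one finds $r=(1-2p)/(1-p)$ and then $\frac{r^*}{1-r^*} = 2\frac{1-r}{r} = \frac{2p}{1-2p}$, so $r^*=2p$. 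Taking marginals of the almost-sure pointwise inequality yields the claimed domination $\mu_p\preceq \phi^0_{2p,2}$.

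The equivalence with $(\phi^0_{2p,2})^c \preceq \mu_{1-p}$ reduces to a symmetry of Eulerian percolation. The involution $T:\eta\mapsto\eta^c$ on $\{0,1\}^{\E^2}$ preserves $\OmegaEP$ because $\Z^2$ is $4$-regular, so each local degree is sent to its complement modulo~$4$, hence to the same parity. A direct inspection of the specification~\eqref{compo} shows that the finite-volume weight $(p/(1-p))^{\sum_\Lambda \eta_e}$ is sent, up to a multiplicative factor depending only on $|\Lambda|$, to the weight defining $\mu^{1-p}_\Lambda$; by the uniqueness part of Theorem~\ref{THEO-mesure}, one concludes $T_\ast\mu_p=\mu_{1-p}$. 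Since complementation reverses pointwise order, applying $T$ componentwise to the coupling produced above converts $\mu_p\preceq\phi^0_{2p,2}$ into $(\phi^0_{2p,2})^c\preceq\mu_{1-p}$. The only genuine computation is the identification $r^*=2p$ in the duality formula, which is precisely what makes the domination land on a random cluster measure with the natural parameter~$2p$; everything else is a bookkeeping exercise with the Edwards--Sokal coupling.
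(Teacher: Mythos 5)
Your argument is correct and is essentially the paper's proof: both rest on the Edwards--Sokal coupling of (Q1) applied to $\gamma^+_{\beta(p)}$, the contour representation of Theorem~\ref{THEO-mesure}, the duality property (Q2) with the same computation $f(\beta(p))^*=2p$, and complementation for the second inequality. The only differences are cosmetic: you state the pointwise inequality $\contour(\sigma)\le\eta_*$ directly on the primal lattice where the paper phrases the same coupling as $\phi^1_{f(\beta(p)),2}\preceq(\mu_p)_*$ on the dual side before dualizing, and you spell out the verification (via the specification and uniqueness) that complementation sends $\mu_p$ to $\mu_{1-p}$, which the paper leaves implicit.
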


\begin{proof} For $p\leq 1/2$, starting from an Ising configuration $\Z^2_*$ of distribution $\gamma^+_{\beta(p)}$, let us draw {\bf all} the edges between identical spins. By Theorem~\ref{THEO-mesure}, the configuration on the edges of $\Z^2_*$ that we obtain is distributed according to $(\mu_{p})_*$.

By property {(Q1)} above, this measure on the edges of $\Z^2_*$ stochastically dominates the distribution $\phi^1_{f({\beta}(p)),2}$:
$$\phi^1_{f({\beta}(p)),2} \preceq (\mu_{p})_*,$$
see Figure~\ref{figure:random_cluster} for an illustration.
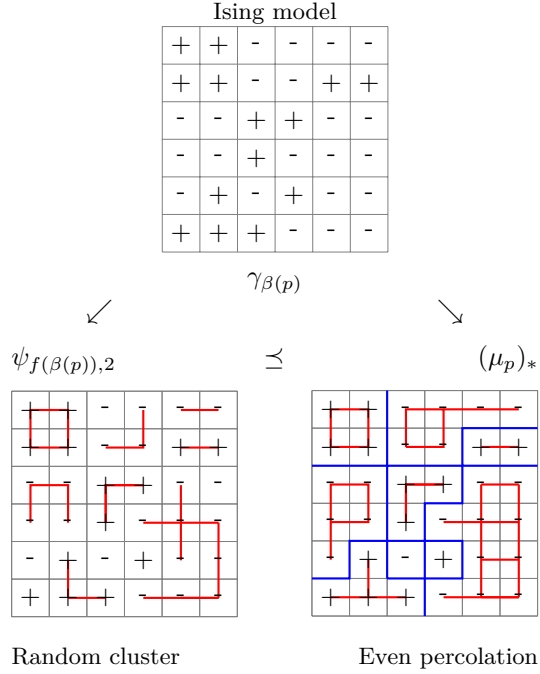
\begin{figure}
\begin{center}
{\small Ising model}

\begin{tikzpicture}[scale=0.5]
\draw[help lines] (0,0) grid (6,6);
\foreach \x in {1,2,3} \node at  (0.5,0.5+\x) {-};
\foreach \x in {2,3} \node at (1.5,0.5+\x) {-};
\foreach \x in {1,4,5} \node at (2.5,0.5+\x) {-};
\foreach \x in {0,2,4,5} \node at (3.5,0.5+\x) {-};
\foreach \x in {0,1,2,3,5} \node at (4.5,0.5+\x) {-};
\foreach \x in {0,1,2,3,5} \node at (5.5,0.5+\x) {-};
\foreach \x in {0,4,5} \node at (0.5,0.5+\x) {+};
\foreach \x in {0,1,4,5} \node at (1.5,0.5+\x) {+};
\foreach \x in {0,2,3} \node at (2.5,0.5+\x) {+};
\foreach \x in {1,3} \node at (3.5,0.5+\x) {+};
\foreach \x in {4} \node at (4.5,0.5+\x) {+};
\foreach \x in {4} \node at (5.5,0.5+\x) {+};
\end{tikzpicture}

$\gamma_{\beta(p)}$

\begin{tabular}{lcr}
\vspace{0.2cm}
\hspace{1cm}$\swarrow$& &$\searrow\hspace{1cm}$\\ \vspace{0.2cm}
$\psi_{f({\beta(p)}),2}$& $\preceq$ &$(\mu_{p})_*$\\ \vspace{0.2cm}
\begin{tikzpicture}[scale=0.5]
\draw[help lines] (0,0) grid (6,6);
\draw[help lines] (0,0) grid (6,6);
\draw[red,thick] (1.5,0.5) -- (2.5,0.5) ;
\draw[red,thick]  (3.5,0.5) -- (4.5,0.5) -- (5.5,0.5) ;
\draw[red,thick] (1.5,0.5) -- (1.5,1.5) ;
\draw[red,thick] (4.5,0.5) -- (5.5,0.5) ;
\draw[red,thick] (3.5,2.5) -- (4.5,2.5) -- (5.5,2.5) ;
\draw[red,thick] (0.5,2.5) -- (0.5,3.5) -- (1.5,3.5) -- (1.5,2.5) ;
\draw[red,thick] (0.5,4.5) -- (0.5,5.5) -- (1.5,5.5) -- (1.5,4.5) -- (0.5,4.5);
\draw[red,thick] (3.5,5.5) -- (3.5,4.5) -- (2.5,4.5);
\draw[red,thick] (2.5,2.5) -- (2.5,3.5) -- (3.5,3.5) ;
\draw[red,thick] (2.5,2.5) -- (2.5,3.5) -- (3.5,3.5) ;
\draw[red,thick] (4.5,4.5) -- (5.5,4.5) ;
\draw[red,thick] (4.5,5.5) -- (5.5,5.5) ;
\draw[red,thick] (4.5,1.5) -- (4.5,2.5) -- (4.5,3.5) ;
\draw[red,thick] (5.5,0.5) -- (5.5,1.5) -- (5.5,2.5) ;
\foreach \x in {1,2,3} \node at  (0.5,0.5+\x) {-};
\foreach \x in {2,3} \node at (1.5,0.5+\x) {-};
\foreach \x in {1,4,5} \node at (2.5,0.5+\x) {-};
\foreach \x in {0,2,4,5} \node at (3.5,0.5+\x) {-};
\foreach \x in {0,1,2,3,5} \node at (4.5,0.5+\x) {-};
\foreach \x in {0,1,2,3,5} \node at (5.5,0.5+\x) {-};
\foreach \x in {0,4,5} \node at (0.5,0.5+\x) {+};
\foreach \x in {0,1,4,5} \node at (1.5,0.5+\x) {+};
\foreach \x in {0,2,3} \node at (2.5,0.5+\x) {+};
\foreach \x in {1,3} \node at (3.5,0.5+\x) {+};
\foreach \x in {4} \node at (4.5,0.5+\x) {+};
\foreach \x in {4} \node at (5.5,0.5+\x) {+};
\end{tikzpicture}
&
&
\begin{tikzpicture}[scale=0.5]
\draw[help lines] (0,0) grid (6,6);
\draw[help lines] (0,0) grid (6,6);
\draw[blue,thick] (3,0) -- (3,2) -- (1,2) -- (1,1) -- (0,1) ;
\draw[blue,thick] (2,1) -- (4,1) -- (4,2) -- (3,2) -- (3,3) -- (4,3) -- (4,4) -- (2,4) -- (2,1) ;
\draw[blue,thick] (6,4)--(4,4)--(4,5)--(5,5)--(6,5) ;
\draw[blue,thick] (0,4)--(2,4)--(2,6) ;
\draw[red,thick] (0.5,0.5) -- (1.5,0.5) -- (2.5,0.5) ;
\draw[red,thick]  (3.5,0.5) -- (4.5,0.5) -- (5.5,0.5) ;
\draw[red,thick] (1.5,0.5) -- (1.5,1.5) ;
\draw[red,thick] (4.5,0.5) -- (5.5,0.5) ;
\draw[red,thick] (4.5,1.5) -- (5.5,1.5) ;
\draw[red,thick] (3.5,2.5) -- (4.5,2.5) -- (5.5,2.5) ;
\draw[red,thick] (0.5,1.5) -- (0.5,2.5) -- (0.5,3.5) -- (1.5,3.5) -- (1.5,2.5) -- (0.5,2.5);
\draw[red,thick] (0.5,4.5) -- (0.5,5.5) -- (1.5,5.5) -- (1.5,4.5) -- (0.5,4.5);
\draw[red,thick] (2.5,4.5) -- (2.5,5.5) -- (3.5,5.5) -- (3.5,4.5) -- (2.5,4.5);
\draw[red,thick] (2.5,2.5) -- (2.5,3.5) -- (3.5,3.5) ;
\draw[red,thick] (2.5,2.5) -- (2.5,3.5) -- (3.5,3.5) ;
\draw[red,thick] (4.5,3.5) -- (5.5,3.5) ;
\draw[red,thick] (4.5,4.5) -- (5.5,4.5) ;
\draw[red,thick] (3.5,5.5) -- (4.5,5.5) -- (5.5,5.5) ;
\draw[red,thick] (4.5,0.5) -- (4.5,1.5) -- (4.5,2.5) -- (4.5,3.5) ;
\draw[red,thick] (5.5,0.5) -- (5.5,1.5) -- (5.5,2.5) -- (5.5,3.5) ;
\foreach \x in {1,2,3} \node at  (0.5,0.5+\x) {-};
\foreach \x in {2,3} \node at (1.5,0.5+\x) {-};
\foreach \x in {1,4,5} \node at (2.5,0.5+\x) {-};
\foreach \x in {0,2,4,5} \node at (3.5,0.5+\x) {-};
\foreach \x in {0,1,2,3,5} \node at (4.5,0.5+\x) {-};
\foreach \x in {0,1,2,3,5} \node at (5.5,0.5+\x) {-};
\foreach \x in {0,4,5} \node at (0.5,0.5+\x) {+};
\foreach \x in {0,1,4,5} \node at (1.5,0.5+\x) {+};
\foreach \x in {0,2,3} \node at (2.5,0.5+\x) {+};
\foreach \x in {1,3} \node at (3.5,0.5+\x) {+};
\foreach \x in {4} \node at (4.5,0.5+\x) {+};
\foreach \x in {4} \node at (5.5,0.5+\x) {+};
\end{tikzpicture}\\
{\small Random cluster}& &{\small Even percolation}\\ \vspace{0.2cm}
\end{tabular}
\end{center}
\caption{From a configuration distributed according to $\gamma_{\beta(p)}$, we construct a configuration distributed according to $\psi_{f({\beta(p)}),2}$ by keeping each edge between identical spins with probability $f(\beta)$ (red graph on the left), and a configuration distributed according to $(\mu_{p})_*$ by keeping all edges between identical spins (red graph on the right: it is the dual graph of the blue contour graph, whose distribution is $\mu_p$).}\label{figure:random_cluster}
\end{figure}
Taking the dual of graphs, we obtain:
$$\mu_{p} \preceq (\phi^1_{f({\beta(p)}),2})_*=\phi^0_{q,2},$$
with, by property {(Q2)}, 
$(\psi_{f({\beta(p)}),2})_*=\phi_{q,2},$
with $$q=f(\beta(p))*={2-2f(\beta(p))\over 2-f(\beta(p))}={2\exp(-2\beta(p))\over 1+\exp(-2\beta(p))}={2\;{p\over 1-p}\over 1+{p\over 1-p}}=2p.$$
Thus, $\mu_{p} \preceq \phi^0_{2p,2}$. Taking the complementary of configurations, 
we obtain the second stochastic comparison.
\end{proof}

In particular, if for some parameter $p\leq 1/2$, there is percolation of closed edges in the random cluster model $\phi_{2p,2}$, then there is percolation of open edges for $\mu_{1-p}$.

Let us set 
$$\mathcal D = \{ \eta\in  \{0,1\}^{\E^2}: \; \hbox{there is an infinite cluster in } \eta^c\}.$$

The event $\mathcal D$ is non-increasing, and $p\mapsto \phi_{p,2}$ is stochastically increasing (Theorem (3.21) in~\cite{MR2243761}), so the map $p \mapsto \phi_{p,2}(\mathcal D)$ is non-increasing: there exists a critical value $\overline{p}_c(2)\in [0,1]$ such that $\phi_{p,2}(\mathcal D)>0$ for $p<\overline{p}_c(2)$ and  $\phi_{p,2}(\mathcal D)=0$ for $p>\overline{p}_c(2)$. In words, $\overline{p}_c(2)$ is the critical parameter for percolation of closed edges in the random cluster model. 
As a consequence of Lemma~\ref{LEM:tree}, we obtain the following result.

\begin{lemme}  \label{LEM:quatre}
For $p\in(1-\overline{p}_c(2)/2,1)$, $\mu_p(\C)=1$.
\end{lemme}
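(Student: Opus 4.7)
My plan is to read Lemma~\ref{LEM:quatre} directly off Lemma~\ref{LEM:tree}, using the definition of $\overline{p}_c(2)$ to produce a percolation event of positive probability under a measure dominated by $\mu_p$, and then invoking ergodicity for the $0$--$1$ law. The hypothesis $p>1-\overline{p}_c(2)/2$ is tailor-made to put $2(1-p)$ below the closed-edge percolation threshold of the random cluster model with $q=2$, which is precisely the regime in which Lemma~\ref{LEM:tree} delivers a useful lower bound on $\mu_p$.

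First I set $q:=1-p$, so that $q<\overline{p}_c(2)/2$, and hence $2q<\overline{p}_c(2)$ and $q<1/2$. I can thus apply the second form of Lemma~\ref{LEM:tree} at parameter $q$ to obtain
$$(\phi^{0}_{2q,2})^c \;\preceq\; \mu_{1-q} \;=\; \mu_p.$$
Next, the definition of $\overline{p}_c(2)$ together with $2q<\overline{p}_c(2)$ yields $\phi^{0}_{2q,2}(\mathcal{D})>0$. Since $\mathcal D$ is the event that the \emph{complementary} configuration carries an infinite cluster, under the image measure $(\phi^{0}_{2q,2})^c$ this reads exactly $(\phi^{0}_{2q,2})^c(\C)>0$.

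To conclude, I note that $\C$ is increasing, because opening extra edges can only merge clusters and never destroy an infinite one. Stochastic domination therefore transfers the positive-probability bound:
$$\mu_p(\C)\;\geq\;(\phi^{0}_{2q,2})^c(\C)\;>\;0.$$
Finally, $\C$ is translation-invariant and $\mu_p$ is ergodic under the $\Z^2$ action by Theorem~\ref{THEO-mesure}, so $\mu_p(\C)\in\{0,1\}$, and the strict inequality above forces $\mu_p(\C)=1$.

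There is no real obstacle in this chain of implications; all the substantive work has been packaged into Lemma~\ref{LEM:tree} (Edwards--Sokal coupling plus planar duality of the FK model). The only delicate point left implicit is the numerical value of $\overline{p}_c(2)$: a crude bound $\overline{p}_c(2)\geq 1/2$ is enough to recover the range $p\geq 3/4$ promised as the ``full proof'' in the paper, whereas matching the conjectured threshold $1-p_{c,\mathrm{even}}=1/\sqrt 2$ requires the sharp identification of $\overline{p}_c(2)$ coming from the Beffara--Duminil-Copin machinery, which is precisely the step the authors announce they will only sketch.
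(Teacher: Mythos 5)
Your argument is correct and is essentially the paper's own proof: apply Lemma~\ref{LEM:tree} at parameter $1-p$, note that $2(1-p)<\overline{p}_c(2)$ gives positive probability of closed-edge percolation in the random cluster model, transfer this to $\mu_p(\C)>0$ via the stochastic domination and the fact that $\C$ is increasing, and conclude with ergodicity of $\mu_p$. The paper states this in two lines; your write-up just spells out the same steps explicitly.
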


\begin{proof} 
By definition of $\overline{p}_c(2)$ and with Lemma \ref{LEM:tree},  $\mu_p(\C)>0$ if $2(1-p)<\overline{p}_c(2)$, which is equivalent to
$p>1-\overline{p}_c(2)/2$. We conclude with the 0--1 law.
\end{proof}

As it was first derived by Onsager \cite{MR0010315}, the critical parameter $p_{c}(2)$ for percolation of open edges in the random cluster model is equal to the self-dual point, \ie  the only fixed point of the map $p \mapsto p^*$: thus
$$p_{c}(2)=\frac{\sqrt 2}{1+\sqrt{2}}.$$

\begin{proof}[Proof of Lemma \ref{LEM:critique}]
As $p_{c,\mathrm{even}}<1/2$, Lemma \ref{LEM:tree} ensures that $\mu_{p_{c,\mathrm{even}}}\preceq \phi^0_{2p_{c,\mathrm{even}},2}$, with $2p_{c,\mathrm{even}}=p_{c}(2)$. But there is no percolation at the critical point for the free boundary condition random cluster measure in dimension 2 (Theorem (6.17) in~\cite{MR2243761}).
\end{proof}

We now give a complete and easy proof of the fact that $\overline{p}_c(2) \ge 1/2$ and a sketch of proof that the strategy used by Beffara and Duminil-Copin~\cite{MR2948685} to prove that the self-dual point of FK-percolation coincides with the critical point may be adopted here.

\begin{lemme} $\displaystyle \overline{p}_c(2) \ge 1/2$.
\end{lemme}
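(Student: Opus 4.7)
The plan is to exploit the self-duality of the $q=2$ random-cluster model, together with the identification $p_c(2)=\sqrt 2/(1+\sqrt 2)=2-\sqrt 2$ recalled just before the lemma. Property (Q2) states that if $\eta$ has law $\phi^1_{p,2}$ then its dual $\eta_\ast$ has law $\phi^0_{p^\ast,2}$, and by construction a closed edge of $\eta$ corresponds to an open edge of $\eta_\ast$. Since $\Z^2_\ast$ is only a translate of $\Z^2$, the event $\{\eta\in\mathcal D\}$ that $\eta$ has an infinite closed cluster coincides with the event that $\eta_\ast$ has an infinite open cluster. This yields the identity
\[
\phi^1_{p,2}(\mathcal D)\;=\;\phi^0_{p^\ast,2}(\C),
\]
which reduces the lemma to a question of supercriticality of an open-edge random-cluster measure at a well-chosen parameter.

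Specializing to $p=1/2$, the formula $p^\ast=(2-2p)/(2-p)$ gives $p^\ast=2/3$. Since $p_c(2)=2-\sqrt 2\approx 0.586<2/3$, the parameter $p^\ast$ lies strictly in the supercritical regime, so by the very definition of $p_c(2)$ as the open-edge percolation threshold (together with the $0$--$1$ law) one has $\phi^0_{2/3,2}(\C)=1>0$. The displayed identity then gives $\phi^1_{1/2,2}(\mathcal D)>0$.

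To convert this into information about $\phi_{1/2,2}$, I would invoke the uniqueness of the infinite-volume random-cluster measure away from the self-dual point (Theorem (6.17) in \cite{MR2243761}): because $1/2\ne p_{\mathrm{sd}}=2-\sqrt 2$, the free and wired measures coincide at $p=1/2$, so $\phi_{1/2,2}(\mathcal D)=\phi^1_{1/2,2}(\mathcal D)>0$. The definition of $\overline{p}_c(2)$ as the threshold where $\phi_{p,2}(\mathcal D)$ becomes zero then forces $\overline{p}_c(2)\geq 1/2$, as claimed.

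There is no serious obstacle: once duality, the value of $p_c(2)$, and uniqueness at non-self-dual parameters are in hand, the proof is essentially a one-line numerical comparison between $2/3$ and $2-\sqrt 2$. The only mild care needed is the bookkeeping that the infinite cluster in $\eta^c$ on $\Z^2$ and the infinite open cluster in $\eta_\ast$ on $\Z^2_\ast$ are the same event up to a translation of the underlying lattice, so that the duality formula can be read directly at the level of the events $\mathcal D$ and $\C$.
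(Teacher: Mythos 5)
There is a genuine gap at the very first step: the claimed identity $\phi^1_{p,2}(\mathcal D)=\phi^0_{p^*,2}(\C)$ is not established, and the justification you give --- that the event that $\eta$ has an infinite cluster of closed edges ``coincides'' with the event that $\eta_*$ has an infinite cluster of open edges because $\Z^2_*$ is a translate of $\Z^2$ --- is false. The duality bijection $e\mapsto e_*$ is not induced by any graph isomorphism $\Z^2\to\Z^2_*$: it rotates each edge by $90^\circ$ about its midpoint, so adjacency is not preserved. Concretely, if the closed edges of $\eta$ form the infinite horizontal path $\{(n,0),(n+1,0)\}$, $n\in\Z$, the corresponding open edges of $\eta_*$ are the vertical edges $\{(n+\tfrac12,-\tfrac12),(n+\tfrac12,\tfrac12)\}$, which are pairwise disjoint: such an $\eta$ lies in $\mathcal D$ while $\eta_*$ has no infinite open cluster, and a symmetric example rules out the converse implication. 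This is exactly the distinction the paper encodes by separating the complement $\eta^c$ (same lattice $\Z^2$, the object entering $\mathcal D$ and $\overline p_c(2)$) from the dual $\eta_*$. Nor does your identity hold for soft distributional reasons: the law of the closed bonds of an FK measure is not itself an FK measure, which is precisely why locating $\overline p_c(2)$ is nontrivial and why the paper only sketches, via the Beffara--Duminil-Copin techniques, the equality $\overline p_c(2)=\sqrt2/(1+\sqrt2)$ in the subsequent lemma. Proving your displayed identity (or even the inequality you need from it) would essentially amount to proving that harder statement, so as written the argument does not establish the lemma.

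The remaining steps (uniqueness of the infinite-volume measure at $p=1/2$ away from the self-dual point, monotonicity of $p\mapsto\phi_{p,2}(\mathcal D)$, the computation $p^*=2/3>p_c(2)=2-\sqrt2$) are fine, but they all rest on the unproved identification. The paper's proof avoids duality entirely: by the comparison inequality $\phi_{p,2}\preceq\Ber(p)^{\otimes\E^2}$ (Theorem (3.21) in Grimmett's book) and the fact that $\mathcal D$ is non-increasing, one gets $\phi_{p,2}(\mathcal D)\ge\Ber(p)^{\otimes\E^2}(\mathcal D)$; for $p<1/2$ the closed edges under $\Ber(p)^{\otimes\E^2}$ are i.i.d.\ Bernoulli with parameter $1-p>1/2$, the bond percolation threshold of $\Z^2$, so this probability is positive and hence $\overline p_c(2)\ge 1/2$. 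If you want to salvage a duality-based route, you would first need to relate $\mathcal D$ to percolation of the closed bonds of the dual FK configuration, which returns you to the same unknown threshold rather than to $p_c(2)$.
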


\begin{proof} The probability measure $\phi_{p,2}$ is dominated by a product of Bernoulli measures with parameter $p$ (Theorem (3.21) in~\cite{MR2243761}) and for $p<1/2$, the event $\mathcal D$ has a positive probability under the product of Bernoulli measures with parameter $p$. As $\mathcal D$ is a non-increasing event, the lemma follows.
\end{proof}

\begin{lemme}
$\displaystyle  \overline{p}_{c}(2)=\frac{\sqrt 2}{1+\sqrt{2}}.$
\end{lemme}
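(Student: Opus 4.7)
The plan is to reduce the claim to the Beffara--Duminil-Copin theorem~\cite{MR2948685}, which identifies the critical parameter for open-edge percolation of $\phi_{p,2}$ on $\mathbb{Z}^2$ with the self-dual point $\frac{\sqrt 2}{1+\sqrt 2}$. The ingredients are then duality (Q2), uniqueness of infinite-volume random-cluster measures off the self-dual point, and the graph isomorphism $\mathbb{Z}^2\simeq\mathbb{Z}^2_*$.

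First, I would reinterpret $\mathcal D$ as a standard open-cluster percolation event for the \emph{dual} configuration $\eta_*$. The configurations $\eta^c\in\{0,1\}^{\E^2}$ and $\eta_*\in\{0,1\}^{\E^2_*}$ encode the very same bit-string $(1-\eta(e))_{e\in\E^2}$, the first read on $\mathbb Z^2$ and the second on the isomorphic copy $\mathbb Z^2_*$. Since percolation is invariant under graph isomorphism, $\mathcal D$ coincides with the event that $\eta_*$ contains an infinite open cluster in $\mathbb{Z}^2_*$.

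Next, I would invoke (Q2). By Theorem~(6.17) of \cite{MR2243761}, for every $p\neq \frac{\sqrt 2}{1+\sqrt 2}$ the infinite-volume random-cluster measure is unique, so $\phi^1_{p,2}=\phi^0_{p,2}=\phi_{p,2}$ and (Q2) upgrades to $(\phi_{p,2})_*=\phi_{p^*,2}$. Combining with the previous step,
$$\phi_{p,2}(\mathcal D)>0 \;\Longleftrightarrow\; \phi_{p^*,2}\text{ percolates (open edges)} \;\Longleftrightarrow\; p^* > p_c(2).$$
The Beffara--Duminil-Copin theorem gives $p_c(2)=\frac{\sqrt 2}{1+\sqrt 2}$, which is the unique fixed point of the strictly decreasing involution $p\mapsto p^*$ on $(0,1)$. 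Hence $p^*>\frac{\sqrt 2}{1+\sqrt 2}$ is equivalent to $p<\frac{\sqrt 2}{1+\sqrt 2}$, and $\overline p_c(2)=\frac{\sqrt 2}{1+\sqrt 2}$ follows. A standard $0$--$1$ argument (translation invariance plus ergodicity of $\phi_{p,2}$) turns ``positive probability'' into ``probability one'' below the threshold, which is precisely what the definition of $\overline p_c(2)$ requires.

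The only real input is the Beffara--Duminil-Copin result, invoked as a black box; everything else is the routine duality book-keeping already developed in the earlier lemmas. One small point to address carefully is that (Q2) is stated for $\phi^1_{p,2}$ with a free dual, while $\mathcal D$ is defined via the unique infinite-volume measure~$\phi_{p,2}$: this is exactly why the uniqueness statement for $p\neq \frac{\sqrt 2}{1+\sqrt 2}$ is needed. The value $\overline p_c(2)=\frac{\sqrt 2}{1+\sqrt 2}=2-\sqrt 2$ is consistent with the previously obtained lower bound $\overline p_c(2)\geq 1/2$.
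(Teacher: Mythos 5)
There is a genuine gap at the very first step of your reduction: you identify the event $\mathcal D$ (an infinite cluster in the complementary configuration $\eta^c$, read as a subgraph of $\Z^2$) with the event that the dual configuration $\eta_*$ has an infinite open cluster in $\Z^2_*$, on the grounds that both encode the same bit-string. But the bijection $e\mapsto e_*$ is not a graph isomorphism at the level of edge incidence: it rotates each edge by $90$ degrees, and two edges sharing a vertex of $\Z^2$ generally have duals that share no vertex of $\Z^2_*$ (and conversely, opposite sides of a unit face have duals meeting at the face's center while the sides themselves are disjoint). Concretely, if the closed edges of $\eta$ are exactly the horizontal edges along the line $y=0$, then $\eta^c$ contains an infinite path, so $\mathcal D$ occurs, while $\eta_*$ consists of pairwise vertex-disjoint vertical edges and has no infinite cluster. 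So $\mathcal D$ is \emph{not} the dual percolation event, and the chain of equivalences $\phi_{p,2}(\mathcal D)>0 \Leftrightarrow \phi_{p^*,2}$ percolates $\Leftrightarrow p^*>p_c(2)$ has no justification; the fact that your final number agrees with the lemma is an artifact of $p_c(2)$ being the self-dual fixed point, not a consequence of your argument. Note also why the analogous shortcut is harmless for Bernoulli percolation: there the complement of a product measure is again a product measure \emph{on the same lattice}, so closed-edge percolation is literally open-edge percolation at parameter $1-p$; for the random-cluster measure the complemented configuration is no longer an FK measure (its weight is $q^{N(\overline\omega)}$), and the dual operation, which does yield an FK measure, changes the connectivity structure. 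This is precisely the difficulty the lemma has to face.

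The paper's route is different and is the one that actually closes this gap: it treats percolation of closed bonds as a random-cluster-type model with weights $q^{N(\overline\omega)}$ and re-runs the Beffara--Duminil-Copin strategy for that model, checking that its three inputs survive --- the differential/sharp-threshold analysis in $p$ (which does not depend on whether the cluster-counting term is $N(\omega)$ or $N(\overline\omega)$), self-duality, and positive association (monotone in the closed bonds because it is monotone in the open ones). If you want to keep a duality-based shortcut, you would need an additional planar argument genuinely relating percolation of $\eta^c$ on $\Z^2$ to (non)percolation of $\eta$ or $\eta_*$ (Zhang/Burton--Keane-type exclusion arguments, plus uniqueness of the infinite-volume measure off the self-dual point), which is substantially more than the ``routine book-keeping'' you invoke; as written, the proposal does not prove the statement.
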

\begin{proof}[Sketch of proof]
The proof by Beffara and Duminil-Copin~\cite{MR2948685} is based of three kind of arguments:
  \begin{itemize}
  \item The study of the variation of $\phi_{p,q}(A)$ with respect to $p$.
  \item The self-duality property
  \item The FK measures are strongly associated
  \end{itemize}
  It is obvious that the self-duality property works as well for closed bonds as for open bonds. Also, the strong association of the closed bonds of FK-percolation immediately follows from the strong association of the open bonds.
  The study of the  variation of the probabilities with respect to $p$  uses the methods that are described in the monography by Grimmett~\cite{MR2243761}. It does not depend on the reference measure: they can be applied as well with $\mu(\omega)=q^{N(\omega)}$ (FK percolation) as with $\mu(\omega)=q^{N(\overline{\omega})}$ (percolation of the closed bonds of FK percolation).
\end{proof}  

From Lemma \ref{LEM:quatre}, the inequality $\overline{p}_c(2) \ge 1/2$ then implies that $\mu_p(\C)=1$ for $p>\frac34$, while $\displaystyle  \overline{p}_{c}(2)=\frac{\sqrt 2}{1+\sqrt{2}}$ implies that $\mu_p(\C)=1$ for $p>1/\sqrt{2}$.

\section{Association and monotonicity under the Eulerian condition}
\label{leschosesetranges}
    The study of Bernoulli bond percolation on a graph $G=(V,E)$ intensively uses
    the following properties of the product measure $\Ber(p)^{\otimes E}$:
    \begin{itemize}
    \item monotonicity: for every increasing event $A$, the map $p\mapsto \Ber(p)^{\otimes E}(A)$ is non-decreasing.
    \item association: for every pair of increasing event $A$, $B$, $$\Ber(p)^{\otimes E}(A\cap B)\ge \Ber(p)^{\otimes E}(A)\Ber(p)^{\otimes E}(B),$$
    or, equivalently, for every pair of non-decreasing bounded functions $F$, $G$, we have $\Cov_{\Ber(p)^{\otimes E}}(F,G)\ge 0$.
    \end{itemize}
It is natural to ask if these properties could be preserved for the measure
$$\mu_{p,G}(.)=\Ber(p)^{\otimes E}(\cdot| \text{the subgraph of open edges is Eulerian}).$$
In the following, we investigate the case of the particular undirected finite Eulerian graph~$G$ given by Figure~\ref{petitgraphhadoc}: we show that the monotonicity property is preserved whereas the association property is lost.
Note that every vertex in $G$ has even degree.    
\begin{figure}
\begin{center}
\includegraphics{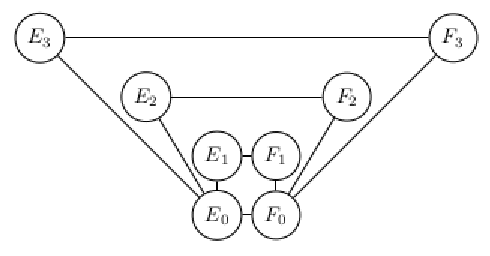}
\end{center}
  \caption{The finite Eulerian graph $G$.}\label{petitgraphhadoc}
\end{figure}        
 For simplicity, we denote $\mu_p=\mu_{p,G}$ the Eulerian percolation measure on~$G$ with opening parameter $p$.
 
For each $i\in\{0,\dots,3\}$, set $X_i=\1_{\{(E_iF_i)\text{ is open}\}}$ and $q=p/(1-p)$. It is not difficult to see that $\mu_p$ is entirely determined by
\begin{align}
\label{formuleepsilon}
\forall \epsilon=(\epsilon_i)_{1 \le i \le 3}\in \{0,1\}^3 \quad \mu_p(X_i=\epsilon_i, \; 1 \le i \le 3)=\frac{q^{N(\epsilon)}}{Z(q)},
\end{align}
where $N(\epsilon)=3(\epsilon_1+\epsilon_2+\epsilon_3)+\1_{\{\epsilon_1+\epsilon_2+\epsilon_3\text{ odd}\}}$ and $Z(q)$ is the normalizing constant:
$$Z_p=1+3q^4+3q^6+q^{10}.$$
For $i\in\{1,2,3\}$, set $C_i=\{X_0=X_i=1\}$.  These events have the same probability
$$\mu_p(C_1)=\mu_p((X_1,X_2,X_3)=(1,0,0))+\mu_p((X_1,X_2,X_3)=(1,1,1))=\frac{q^4+q^{10}}{Z_p}.$$
The events $\{X_0=X_1=X_2=X_3=1\}$,  $\{X_0=X_1=X_2=1\}$,  $\{X_0=X_2=X_3=1\}$, $\{X_0=X_1=X_3=1\}$ coincide $\mu_p$-almost-surely, so $C_1\cap C_2$ and $C_3$  are positively correlated. But
\begin{align*}
 \mu_p(C_1\cap C_2)-\mu_p(C_1)\mu_p(C_2)&=\frac{q^{10}}{Z_p}-\left(\frac{q^4+q^{10}}{Z_p}\right)^2
  =\frac{-q^8+q^{10}+q^{14}+3q^{16}}{Z_p^2}<0
\end{align*} 
for $q<0,74$, so $C_1$ and $C_2$ are negatively correlated for each $p<0,42$.

However, the sequence $(\mu_p)_{p\in [0,1]}$ is non-decreasing for the stochastic order:
\begin{theorem}
Let $G=(V,E)$ be the graph illustrated by Figure \ref{petitgraphhadoc}.
Let $A\in\mathcal{P}(\{0,1\}^{E})$ be an increasing event.
Then, $p\mapsto \mu_p(A)$ is non-decreasing.
Equivalently, if $F$ is a monotonic boolean function on $\{0,1\}^{|E|}$, $p\mapsto \int F\ d\mu_p$ is non-decreasing.
\end{theorem}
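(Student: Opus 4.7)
Setting $q=p/(1-p)$, monotonicity in $p$ amounts to monotonicity in $q$. My plan is to reduce the problem to a finite polynomial check by writing $\mu_p$ explicitly on its $8$-point support and by identifying the relevant poset structure.

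I first combine formula~\eqref{formuleepsilon} with the fact that $N(\epsilon)$ is precisely the number of open edges in the unique Eulerian configuration associated with $(X_1,X_2,X_3)=\epsilon$, so that
\[
\mu_p(\omega)=\frac{q^{|\omega|}}{Z(q)},\qquad Z(q)=1+3q^4+3q^6+q^{10},
\]
on $\OmegaEP$. For any increasing event $A\subset\{0,1\}^E$, one has $\mu_p(A)=\mu_p(A\cap\OmegaEP)$, and $A\cap\OmegaEP$ is an up-set of the poset $(\OmegaEP,\le)$ inherited from the coordinatewise order on $\{0,1\}^E$; it therefore suffices to verify non-decreasingness for each such up-set.

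I will then describe $(\OmegaEP,\le)$. With $s:=X_1+X_2+X_3\in\{0,1,2,3\}$, the levels $|\omega|\in\{0,4,6,10\}$ have multiplicities $(1,3,3,1)$. The empty and full configurations form the unique minimum and maximum, and the six middle configurations (those with $s\in\{1,2\}$) are \emph{pairwise incomparable}. Incomparability within a level is immediate from the symmetric role of $X_1,X_2,X_3$; the crucial cross-level check uses $X_0=X_1\oplus X_2\oplus X_3$, which forces the central edge $(E_0F_0)$ to be open iff $s$ is odd, hence to be present for $s=1$ but absent for $s=2$. This is precisely the obstruction that destroys association in the computation preceding the theorem. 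Consequently, every non-trivial up-set of $(\OmegaEP,\le)$ has the form $\{\text{top}\}\cup S$ with $S$ an arbitrary subset of the six middle configurations. By the $S_3$-symmetry of $\mu_p$ in $\{X_1,X_2,X_3\}$, the probability
\[
\mu_p(\{\text{top}\}\cup S)=\frac{q^{10}+a\, q^4+b\, q^6}{Z(q)}
\]
depends only on $(a,b)\in\{0,1,2,3\}^2$, where $a$ and $b$ are the numbers of $4$-edge and $6$-edge configurations in $S$.

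Finally, for each of the $16$ pairs $(a,b)$ I check that $P/Z$ is non-decreasing in $q$, where $P(q)=q^{10}+a\, q^4+b\, q^6$. A direct expansion gives
\[
P'Z-PZ'=4a\, q^3+6b\, q^5+(10-6a+6b)\, q^9+(18-6a)\, q^{13}+(12-4b)\, q^{15}.
\]
For $(a,b)\in\{0,1,2,3\}^2$, every coefficient is non-negative except possibly the one at $q^9$, which becomes negative only for the three pairs $(a,b)\in\{(2,0),(3,0),(3,1)\}$; in each of these cases a short factorisation yields non-negativity (for example, $(a,b)=(3,0)$ gives $4q^3(3-2q^6+3q^{12})$, whose quadratic factor in $q^6$ has discriminant $4-36<0$). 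The main obstacle is the poset description of the second step: precisely because the level sets of $|\omega|$ decompose into pairwise incomparable antichains rather than forming a chain in the stochastic order, no mixture/Strassen argument of the form $\nu_4\preceq\nu_6$ is available, and one is forced to carry out the case-by-case polynomial check above.
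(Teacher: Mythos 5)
Your proof is correct, and it reaches the same final criterion as the paper (write $\mu_p(A)$ as a rational function of $q=p/(1-p)$ with denominator $Z(q)=1+3q^4+3q^6+q^{10}$ and show the numerator of the derivative, $P'Z-PZ'$, is non-negative on $(0,\infty)$), but via a genuinely different and more economical reduction. The paper first collapses the configuration to $(X_0,X_1,X_2,X_3)$, then enumerates by computer all $168$ monotone boolean functions on $\{0,1\}^4$ (whence the discussion of Dedekind numbers and the Julia program) and checks each resulting polynomial $R_F=P_F'Z-P_FZ'$. You instead observe that only the trace of the increasing event on the eight-point support matters, verify that the six middle Eulerian configurations form an antichain (the parity constraint $X_0=X_1\oplus X_2\oplus X_3$ making the central edge open exactly on the $4$-edge level and closed on the $6$-edge level), so that every non-trivial up-set is $\{\text{full}\}\cup S$ and its probability depends only on $(a,b)\in\{0,\dots,3\}^2$; the single closed-form expression
$P'Z-PZ'=4aq^3+6bq^5+(10-6a+6b)q^9+(18-6a)q^{13}+(12-4b)q^{15}$
then settles all cases at once, with only $(a,b)\in\{(2,0),(3,0),(3,1)\}$ needing a one-line positivity argument. (Indeed your $16$ polynomials, together with $0$ for the trivial up-sets, are exactly the list the paper produced by brute force, so your argument also explains that list conceptually.) What your route buys is a fully hand-checkable, computer-free proof and an explanation of why only a negative $q^9$ coefficient can ever appear; what the paper's route buys is robustness (it would apply verbatim to any monotone functional of $(X_0,\dots,X_3)$ without analysing the support poset). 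The only cosmetic points: the appeal to $S_3$-symmetry is unnecessary (additivity of $\mu_p$ over the eight atoms already gives the $(a,b)$-parametrisation), and you should spell out the easy positivity checks for $(2,0)$ and $(3,1)$ as you did for $(3,0)$ — e.g. $8q^3-2q^9+6q^{13}+12q^{15}\ge 0$ since either $q^6\le 4$ or $6q^{13}\ge 2q^9$ — but these are not gaps.
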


\begin{proof}
Let $\eta_{i,j}\in\{0,1\}$ be the state of the edge between vertices $i$ and $j$. 
The structure of the graph implies that $\mu_p$-almost surely, 
$$
\forall i \in \{1,2,3\} \quad X_i =\eta_{E_0,E_i}=\eta_{E_i,F_i}=\eta_{F_i,F_0}.
$$
  So, if $F$ is a non-decreasing fonction on $\{0,1\}^E$, we have $\mu_p$ a.s. :
\begin{align*}
&F(\eta_{E_0,F_0},\eta_{E_0,E_1},\eta_{E_1,F_1},\eta_{F_1,F_0},\eta_{E_0,E_2},\eta_{E_2,F_2},\eta_{F_2,F_0},\eta_{E_0,E_3},\eta_{E_3,F_3},\eta_{E_3,F_0})\\=&F_1(X_0,X_1,X_2,X_3),\text{ with }F_1(x,y,z,t)=F(x,y,y,y,z,z,z,t,t,t).
\end{align*}
By construction, $F_1$ is a non-decreasing function, so it is sufficient to prove
that for any non-decreasing function $F$:$\{0,1\}^4\to\{0,1\}$, the map $p\mapsto \int F((X_i)_{0 \le i \le 3})\ d\mu_p$ is non-decreasing.
The law of $(X_0,X_1,X_2,X_3)$ under $\mu_p$ is easy to express: for every $\epsilon=(\epsilon_i)_{0 \le i \le 3}\in \{0,1\}^4,$
\begin{align*}
&\mu_p(X_i=\epsilon_i, \; 0 \le i \le 3) = \1_{\{\epsilon_0+\epsilon_1+\epsilon_2+\epsilon_3\text{ even}\}} \mu_p(X_i=\epsilon_i, \; 1 \le i \le 3).
\end{align*}
  With \eqref{formuleepsilon}, it is easy to see that $\int F((X_i)_{0 \le i \le 3})\ d\mu_p$
  can be expressed as a rational function of $q=\frac{p}{1-p}$:
$$\int F((X_i)_{0 \le i \le 3})\ d\mu_p=\frac{P_F(q)}{Z(q)},$$
  so if is sufficient to check that the polynomial $R_F=P_F'Z-P_FZ'$ has no positive root, which can be easily performed with a modern computer.
  In fact, it happens that for each monotonic boolean functions $F$, 
  $$R_F\in\left\{\begin{array}{ll}
  0,10q^{9}+ 18q^{5}+ 12q^{3},&12q^{13}+ 22q^{9}+ 18q^{5}+ 4q^{3},\\
  12q^{15}+ 12q^{13}+ 4q^{9}+ 4q^{3},&12q^{15}+ 18q^{13}+ 10q^{9},\\
  12q^{15}+ 6q^{13}- 2q^{9}+ 8q^{3},&12q^{15}- 8q^{9}+ 12q^{3},\\
  18q^{13}+ 28q^{9}+ 18q^{5},&4q^{15}+ 12q^{13}+ 16q^{9}+ 12q^{5}+ 4q^{3},\\
  4q^{15}+ 18q^{13}+ 22q^{9}+ 12q^{5}&4q^{15}+ 4q^{9}+ 12q^{5}+ 12q^{3},\\
  4q^{15}+ 6q^{13}+ 10q^{9}+ 12q^{5}+ 8q^{3},&6q^{13}+ 16q^{9}+ 18q^{5}+ 8q^{3},\\
  8q^{15}+ 12q^{13}+ 10q^{9}+ 6q^{5}+ 4q^{3},&8q^{15}+ 18q^{13}+ 16q^{9}+ 6q^{5},\\
  8q^{15}- 2q^{9}+ 6q^{5}+ 12q^{3},&8q^{15} + 6q^{13} + 4q^9 + 6q^5 + 8q^3
  \end{array}\right\}$$ In most cases, the coefficients of $R_F$ are non-negative; in any case, it is easy to prove that $R_F$ has no positive root.

  We obtain the list of the 168 functions by a  brute-force algorithm based on the following remark: if $M_n$ denotes the set of monotonic boolean functions on $\{0,1\}^n$, there is a natural one-to-one correspondance between
  $M_{n+1}$ and $\{(f,g)\in M_n^2; f\le g\}$: a function $G$ of $n+1$ variables $(x_1,\dots,x_{n+1})$ is associated to the pair of functions
  $((x_1,\dots,x_n)\mapsto F(x_1,\dots,x_n,0),(x_1,\dots,x_n)\mapsto F(x_1,\dots,x_n,1))$.
The number $|M_n|$ of monotonic boolean functions is known as the Dedekind number. The sequence $(|M_n|)_{n\ge 1}$ increases very fast and is not easy to compute. In fact, the exact values are only known for $n\le 8$ (see Wiedemann~\cite{MR1129608}). 
\end{proof}
        
We conjecture that this result should be more general:
\begin{conj}
Let $G=(V,E)$ be a Eulerian graph.
  Then, the sequence of Eulerian percolation measures $(\mu_p)_{p\in [0,1]}$ on $\{0,1\}^E$ is stochastically non-decreasing.
\end{conj}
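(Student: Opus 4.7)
The plan is to reduce the conjecture to a covariance inequality and attack it via a coupling adapted to the cycle structure of $G$. For a finite Eulerian graph, $\mu_p$ is a single-parameter exponential family in $q = p/(1-p)$ with sufficient statistic $|\omega|$, and a direct differentiation gives
\begin{equation*}
\frac{d}{dp} \int F \, d\mu_p \;=\; \frac{1}{p(1-p)}\, \Cov_{\mu_p}(|\omega|, F),
\end{equation*}
so the conjecture is equivalent to $\Cov_{\mu_p}(|\omega|, \1_A) \geq 0$ for every increasing event $A$ and every $p \in (0,1)$. Since $|\omega| = \sum_e \omega_e$, this amounts to asking that $\sum_{e} \Cov_{\mu_p}(\omega_e,\1_A)$ be non-negative; individual terms may be negative, as the failure of association in Figure~\ref{petitgraphhadoc} shows, but their sum should not. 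The infinite case would then follow by taking thermodynamic limits, once finite-volume monotonicity is in hand.

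The natural route is a Strassen-type monotone coupling of $\mu_p$ and $\mu_{p'}$ for $p \leq p'$. Since Eulerian configurations form the cycle space $\mathcal{C}(G) \subseteq \mathbb{F}_2^E$, the candidate dynamics is the \emph{cycle heat-bath}: pick a cycle $C$ from a fixed generating family, and flip $\omega \mapsto \omega \oplus C$ with probability $q^{|C|-|\omega \cap C|}/(q^{|\omega \cap C|}+q^{|C|-|\omega \cap C|})$, otherwise leave $\omega$ unchanged. This chain is ergodic on $\mathcal{C}(G)$ and stationary for $\mu_p$. One would couple two such chains at $p$ and $p'$ through a common random cycle $C$ together with a Holley-type joint law on the flip decisions designed to preserve $\omega \leq \omega'$.

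The main obstacle is that $\omega \mapsto \omega \oplus C$ \emph{reverses} the partial order on $C$: if $\omega \leq \omega'$ then $(\omega \oplus C)\cap C = C \setminus (\omega \cap C) \supseteq C \setminus (\omega' \cap C) = (\omega' \oplus C)\cap C$, so synchronous flips systematically break monotonicity. Any valid coupling must therefore mix ``flip only above'' and ``flip only below'' moves, but because $\mu_p$ fails FKG the standard Holley comparison does not apply off the shelf, and one must design bespoke joint flip probabilities consistent with both marginals. A promising fallback is the high-temperature representation: for any Eulerian $G$ one has, up to explicit multiplicative constants, $\sum_{\omega \in \mathcal{C}(G)} q^{|\omega|} = Z_{\mathrm{Ising}}(G,\beta)$ with $\tanh\beta = q$, and $\mu_p$ identifies with the law of a random current; one could then try to derive the covariance inequality from Griffiths- or GKS-type correlation inequalities on the corresponding Ising system. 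The hardest step of either approach will be translating monotonicity for a \emph{general} increasing edge event back through the cycle or current correspondence without exploiting any FKG property of $\mu_p$, which genuinely fails.
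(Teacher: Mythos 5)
You should first note that the statement you were given is not a theorem of the paper but an open conjecture: the paper proves monotonicity only for the single ten-edge Eulerian graph of Figure~\ref{petitgraphhadoc}, and it does so by reducing to the four effective variables $(X_0,\dots,X_3)$ and exhaustively checking, for each of the $168$ monotone boolean functions, that the polynomial $R_F=P_F'Z-P_FZ'$ has no positive root. So there is no general argument in the paper to compare yours against, and your text should be judged on whether it constitutes a proof of the general statement. It does not. The one step you carry out completely, the exponential-family identity
\begin{equation*}
\frac{d}{dp}\int F\,d\mu_p=\frac{1}{p(1-p)}\,\Cov_{\mu_p}\bigl(|\omega|,F\bigr),
\end{equation*}
is correct and reduces the conjecture to $\Cov_{\mu_p}(|\omega|,\1{A})\ge 0$ for increasing $A$; this is standard and is in effect the starting point of the paper's finite computation (checking $R_F\ge 0$ is exactly checking this covariance). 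Everything after that is a plan whose key obstacles you yourself identify and do not overcome: the cycle heat-bath coupling is intrinsically order-reversing on the flipped cycle, and the Holley-type repair you invoke is unavailable precisely because $\mu_p$ fails FKG --- the paper's Section~\ref{leschosesetranges} gives an explicit pair of increasing events that are negatively correlated, so any argument that implicitly needs positive association of $\mu_p$ is dead on arrival.

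The fallback via the high-temperature expansion is also only gestured at, and it has a concrete limitation you do not address: the identification $\sum_{\omega\in\mathcal{C}(G)}q^{|\omega|}=\mathrm{const}\cdot Z_{\mathrm{Ising}}(G,\beta)$ with $\tanh\beta=q$ requires $q<1$, i.e.\ $p<1/2$, so it cannot by itself cover the whole range $p\in[0,1]$ claimed in the conjecture (the paper's own correspondence, $p=1/(1+e^{2\beta})$ through contours on the dual, is a different, low-temperature identification and likewise changes sign of $\beta$ at $p=1/2$). More importantly, even where the random-current picture applies, Griffiths/GKS inequalities control correlations $\langle\sigma_A\sigma_B\rangle$ and their monotonicity in $\beta$, hence only those increasing events expressible through such correlations; you acknowledge that translating an \emph{arbitrary} increasing edge event through the current correspondence is ``the hardest step,'' and no mechanism is proposed for it. In short: your reduction is sound, your two strategies are reasonable research directions (and the second is close in spirit to the related results of Cammarota and Russo cited by the paper), but the central inequality $\Cov_{\mu_p}(|\omega|,\1{A})\ge 0$ is left unproven, so the conjecture remains exactly as open after your argument as before it.
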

Note that Cammarota and Russo~\cite{MR1115955} proved related results supporting this conjecture.
\section*{Appendix: code of the Julia program}
\begin{small}
\begin{lstlisting}
using SymPy

function valeur(a,b,c,d)
    q=Sym("q")
    if (((a+b+c+d)%2)==0)
        n=3*(b+c+d)+a
        if (n>0)
            return poly(q^n)
        else return 1 end
    else return 0
    end
end

function zp()
    q=Sym("q")
    z=poly(q)*0
    for a=0:1
        for b=0:1
            for c=0:1
                for d=0:1
                    z+=valeur(a,b,c,d)
                end
            end
        end
    end
    return z
end

function evalue(numero,t)
taille=length(t)
if (taille==0) return(Int32(numero))
else
 haute=Int32(numero) & (2^(2^(taille-1))-1)
 basse=div((Int32(numero)-haute),2^(2^(taille-1)))
 if (t[taille]==1)
 return evalue(haute,t[(1:taille-1)'])
 else
 return evalue(basse,t[(1:taille-1)'])
end
end
end

function integrale_num(numero)
    p=Sym("q")
    z=poly(p)*0
    for a=0:1
        for b=0:1
            for c=0:1
                for d=0:1
                    z+=evalue(numero,[a b c d])*valeur(a,b,c,d)
                end
            end
        end
    end
    return z
end   

function variation(numero)
    Z=zp()
    dZ=diff(Z)
    N=integrale_num(numero)
    dN=diff(N)
    nder=dN*Z-dZ*N
    return nder
end

function compte(numero)
    d=variation(numero)
    p=Sym("q")
return count_roots(d,0)-1*(subs(d,p,0)==0)
end

t=[Int64(0);Int64(1)]
nbit=Int32(1)
for i=1:4
s=[]
for a=t
for b=t
if ((a & b)==a)
  s=[s;  a*(2^nbit)+b]
   end
end
end
nbit=nbit*2
print("For n=",i,", there is ",length(s),"functions. Here is the list:")
println(s); t=s
end


bad=0
for i=t
    j=compte(i)
   if (j>0)
    bad+=1
    end
    print(" ",integrale_num(i)," ",compte(i),"\n")

end
print("Z(q)=",zp(),"\n")
print("Among the ", length(t), "non-decreasing fonctions ,",bad)
print(" do not satisfy the conjecture.")             
\end{lstlisting}
\end{small}

\def\refname{References}
\bibliographystyle{plain}
\bibliography{percopaire}

\begin{thebibliography}{10}

\bibitem{MR573615}
Michael Aizenman.
\newblock Translation invariance and instability of phase coexistence in the
  two-dimensional {I}sing system.
\newblock {\em Comm. Math. Phys.}, 73(1):83--94, 1980.

\bibitem{MR2948685}
Vincent Beffara and Hugo Duminil-Copin.
\newblock The self-dual point of the two-dimensional random-cluster model is
  critical for {$q\geq 1$}.
\newblock {\em Probab. Theory Related Fields}, 153(3-4):511--542, 2012.

\bibitem{MR990777}
R.~M. Burton and M.~Keane.
\newblock Density and uniqueness in percolation.
\newblock {\em Comm. Math. Phys.}, 121(3):501--505, 1989.

\bibitem{MR1115955}
Camillo Cammarota and Lucio Russo.
\newblock Bernoulli and {G}ibbs probabilities of subgroups of {$\{0,1\}^S$}.
\newblock {\em Forum Math.}, 3(4):401--414, 1991.

\bibitem{coniglio}
Antonio Coniglio, Chiara~Rosanna Nappi, Fulvio Peruggi, and Lucio Russo.
\newblock Percolation and phase transitions in the {I}sing model.
\newblock {\em Comm. Math. Phys.}, 51(3):315--323, 1976.

\bibitem{MR956646}
Hans-Otto Georgii.
\newblock {\em Gibbs measures and phase transitions}, volume~9 of {\em de
  Gruyter Studies in Mathematics}.
\newblock Walter de Gruyter \& Co., Berlin, 1988.

\bibitem{MR1757954}
Hans-Otto Georgii and Yasunari Higuchi.
\newblock Percolation and number of phases in the two-dimensional {I}sing
  model.
\newblock {\em J. Math. Phys.}, 41(3):1153--1169, 2000.
\newblock Probabilistic techniques in equilibrium and nonequilibrium
  statistical physics.

\bibitem{MR2243761}
Geoffrey Grimmett.
\newblock {\em The random-cluster model}, volume 333 of {\em Grundlehren der
  Mathematischen Wissenschaften [Fundamental Principles of Mathematical
  Sciences]}.
\newblock Springer-Verlag, Berlin, 2006.

\bibitem{GrimJans}
Geoffrey Grimmett and Svante Janson.
\newblock Random even graphs.
\newblock {\em The Electronic Journal of Combinatorics [electronic only]},
  16(1):Research Paper R46, 19 p.--Research Paper R46, 19 p., 2009.

\bibitem{MR712693}
Y.~Higuchi.
\newblock On the absence of non-translation invariant {G}ibbs states for the
  two-dimensional {I}sing model.
\newblock In {\em Random fields, {V}ol. {I}, {II} ({E}sztergom, 1979)},
  volume~27 of {\em Colloq. Math. Soc. J\'anos Bolyai}, pages 517--534.
  North-Holland, Amsterdam-New York, 1981.

\bibitem{higuchi}
Yasunari Higuchi.
\newblock Coexistence of infinite {$(*)$}-clusters. {II}. {I}sing percolation
  in two dimensions.
\newblock {\em Probab. Theory Related Fields}, 97(1-2):1--33, 1993.

\bibitem{MR648202}
C.~M. Newman and L.~S. Schulman.
\newblock Infinite clusters in percolation models.
\newblock {\em J. Statist. Phys.}, 26(3):613--628, 1981.

\bibitem{MR620606}
C.~M. Newman and L.~S. Schulman.
\newblock Number and density of percolating clusters.
\newblock {\em J. Phys. A}, 14(7):1735--1743, 1981.

\bibitem{MR0010315}
Lars Onsager.
\newblock Crystal statistics. {I}. {A} two-dimensional model with an
  order-disorder transition.
\newblock {\em Phys. Rev. (2)}, 65:117--149, 1944.

\bibitem{russo}
Lucio Russo.
\newblock The infinite cluster method in the two-dimensional {I}sing model.
\newblock {\em Communications in Mathematical Physics}, 67(3):251--266, 1979.

\bibitem{MR1871828}
J.~H. van Lint and R.~M. Wilson.
\newblock {\em A course in combinatorics}.
\newblock Cambridge University Press, Cambridge, second edition, 2001.

\bibitem{MR1129608}
Doug Wiedemann.
\newblock A computation of the eighth {D}edekind number.
\newblock {\em Order}, 8(1):5--6, 1991.

\end{thebibliography}

\end{document}